\begin{document}
\newtheorem{defn}{Definition}
\newtheorem{fact}{Fact}
\newtheorem{thm}{Theorem}
\newtheorem{prop}{Proposition}
\newtheorem{lem}{Lemma}
\newtheorem{conj}{Conjecture}
\newtheorem{slem}{Sublemma}
\newtheorem{cor}{Corollary}
\newtheorem{ass}{Assumption}
\newtheorem{ques}{Question}
\newtheorem*{mlem*}{Main Lemma}

\newcounter{constant} 
\newcommand{\newconstant}[2]{\refstepcounter{constant}\label{#1}}
\newcommand{\useconstant}[2]{C_{\ref{#1}}}
\title{Random walks and the symplectic representation of the braid groups}
\author{Marc Soret and Marina Ville}
\maketitle
\begin{abstract} We consider the symplectic representation $\rho_n$ of a braid group $B(n)$ in $Sp(2l,\mathbb{Z})$, for $l=\Big[\dfrac{n-1}{2}\Big]$. If $P$ is a  polynomial on the $4l^2$ coefficients of the matrices in $Sp(2l,\mathbb{Z})$, we show that the set $\{\beta\in B(n): P(\rho_n(\beta))=0\}$ is transient for non degenerate random walks on $B(n)$. \\
	We derive that the $n$-braids $\beta$ which close into a loop $\hat{\beta}$ with  $0<|det({\hat{\beta}})|\leq C$ for some constant $C$ form a transient set. And given a prime number $p$, we show that the probability for a given braid to close in a $p$-colorable loop is greater than $\dfrac{1}{p}$. \\
	We also derive that for a random $3$-braid,  the quasipositive links $(\beta\sigma_i\beta^{-1}\sigma_j)^p$ have zero signature for every integer $p$ and $1\leq i,j\leq 2$. \\ As an example of such braids, we investigate the signature of the  Lissajous toric knots $3$-braids.

\end{abstract}
\section{Preliminaries}
\subsection{Representations of the braid group}\label{paragraphe sur les representations}
If $B(n+1)$ is the group of braids with $n+1$ strands, we denote by 
$\mathcal B_t$ its {\it reduced Burau representation} which represents braids as $n\times n$ matrices with values in $\mathbb{Z}[t,t^{-1}]$. The specialization of $\mathcal B_t$ for $t=-1$, denoted $\mathcal B_{-1}$, is called the {\it integral reduced Burau representation}. The representation $\mathcal B_{-1}$ is closely related to the {\it symplectic representation} of $B(n+1)$ which we now recall, using the description of A'Campo ([A'C] \S  1).\\
We consider a surface $X$ such that $H_1(X,\mathbb{Z})$ is generated by $n$ loops; the generators of $B(n+1)$ act on $H_1(X,\mathbb{Z})$ as Dehn twists w.r.t. corresponding generators of $H_1(X,\mathbb{Z})$. This action is conjugate to the integral reduced Burau representation ${\mathcal B}_{-1}$. \\  
This action of $B(n+1)$ preserves the intersection form $I$ on $H_1(X,\mathbb{Z})$. 
\begin{enumerate}
	\item 
	If $n$ is even, $n=2l$, the form $I$ is symplectic nondegenerate on $H_1(X,\mathbb{Z})$.
	\item
	If $n$ is odd, $n=2l+1$, the kernel of $I$ is a line $K$; the map $\mathcal B_{-1}$ restricts to the identity on $K$ and  acts symplectically on the quotient $H_1(X,\mathbb{Z})/K$ 
\end{enumerate}
Thus, in both cases, we get  a {\it symplectic representation}
\begin{equation}\label{representation symplectique0}
\rho_n:B(n+1)\longrightarrow Sp(2l,\mathbb{Z})
\end{equation} 
\subsection{Random walks on groups}\label{paragraph on random walks}
If $\mu_1,\mu_2$ are probability measures on a discrete group $G$ with finite support, their {\it convolution} is defined for $g\in G$ as 
\begin{equation}\label{definition de la convolution}
(\mu_1\star\mu_2)(g)=\sum_{h\in G}\mu_1(gh^{-1})\mu_2(h)
\end{equation}
\begin{defn}\label{non degenerate}
	A  probability $\mu$ on a discrete group $G$ is nondegenerate if $supp(\mu)$ generates $G$ as a semigroup.\end{defn} Malyutin ([Mal]) defines the {\it  right random $\mu$-walk} as the Markov chain which starts at the identity of $G$ and with transition probability $P(g,h)=\mu(gh^{-1})$. If $X$ is a subset of $G$, the probability that this walk hits $X$ at the $k$-th step is $\mu^{\star k}(X)$. 
\subsection{Signature and determinant of a link}\label{definition de la signature}
These are classical invariants of links and knots. 
If $\gamma$ is a link on $\mathbb{S}^3$ which bounds a Seifert surface $\Sigma$ in $\mathbb{S}^3$, we consider the bilinear form called {\it Seifert form} $$\Phi: H_1(\Sigma,\mathbb{Z})\times H_1(\Sigma, \mathbb{Z})\longrightarrow\mathbb{Z}$$
$$(\alpha,\beta)\mapsto lk(\hat{\alpha},\beta)$$ where $lk$ is the linking number and $\hat{\alpha}$ is the link obtained by pushing $\alpha$ in the direction normal to $\Sigma$ in $\mathbb{S}^3$. Given a base for $H_1(\Sigma, \mathbb{Z})$, we define the matrix $V$ of $\Phi$ and consider the bilinear form defined by the matrix $V+^tV$: its determinant is the {\it determinant} of the link and its {\it signature} is the signature of the link. We point out that the literature contains two different definitions of the signature which coincide up to sign.\\
If $\Delta_L$ is the Alexander polynomial of a link, then $det(L)=\Delta_L(-1)$.\\ 
We recall (cf. [Mu]) that for a knot $K$, 
\begin{equation}
|sign(K)|\leq 2g_4(K)
\end{equation} where $g_4$ denotes the topological $4$-genus.  Thus a slice knot has zero signature but the converse is not true and we will see examples of that below.
\subsection{Positive links and quasipositive braids}
A {\it positive link} is a link which has a diagram with all positive crossings.
\begin{thm}([Pr])
	A positive link has strictly negative signature.
\end{thm}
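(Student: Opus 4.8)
\emph{Strategy.} The plan is to pass to a single reduced positive diagram, rephrase the signature through the canonical Seifert surface, and then induct on the number of crossings by means of a Murasugi‑type skein inequality. First the reductions. We take the statement to exclude the unknot and unlinks (whose signature vanishes). Since $sign$ is additive over split unions and under connected sum, and split summands and connected summands of a positive link are again positive with no more crossings (and the genus is additive as well, so at least one surviving summand stays non‑trivial), it suffices to prove $sign(L)<0$ for a non‑trivial link $L$ presented by a reduced connected positive diagram $D$ — "reduced" meaning no nugatory crossing, arranged by Reidemeister~I moves and by peeling off connected summands. Running Seifert's algorithm on $D$ builds a Seifert surface $\Sigma$ from one disk per Seifert circle and one positively half‑twisted band per crossing; it is connected with $b_1(\Sigma)=c-s+1$ ($c=\#$ crossings, $s=\#$ Seifert circles), and by the Stallings--Cromwell theorem it is of minimal genus, so non‑triviality of $L$ forces $b_1(\Sigma)\ge 1$. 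Taking a basis of $H_1(\Sigma)$ given by the cycles $\gamma_e$ through the non‑tree edges of the Seifert graph $\Gamma$ (vertices $=$ Seifert circles, edges $=$ crossings), we have $sign(L)=sign(V+{}^tV)$ with $V_{ij}=lk(\hat\gamma_i,\gamma_j)$, and since every band is positively twisted each diagonal entry of $V+{}^tV$ is strictly negative. This rules out $V+{}^tV$ being positive definite, but it does not give $sign(L)<0$: already for $T(4,5)$ the form $V+{}^tV$ is genuinely indefinite, so a global argument is needed.

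Next I would induct on $c$. If $b_1(\Sigma)=1$ then $H_1(\Sigma)=\mathbb Z\gamma$, the matrix $V+{}^tV$ is $1\times 1$ with negative entry, and $sign(L)=-1$; this is the base case, and it already contains the positive Hopf link. If $b_1(\Sigma)\ge 2$, smooth any crossing $x$ of $D$ along the orientation: the result $D_0$ is a positive diagram with $c-1$ crossings whose canonical surface $\Sigma_0$ has $b_1(\Sigma_0)=b_1(\Gamma\setminus x)\ge b_1(\Gamma)-1\ge 1$, so by Stallings--Cromwell applied to $D_0$ (or to its connected/split pieces) the link $L_0$ is non‑trivial, and its genus is carried by a summand that is positive with $<c$ crossings. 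The inductive hypothesis gives $sign(L_0)\le -1$. Now invoke the Murasugi--Przytycki inequality for a skein triple in which the distinguished crossing is positive, namely $sign(L_+)\le sign(L_0)$; applied here it yields $sign(L)\le sign(L_0)\le -1<0$, which closes the induction.

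The step I expect to be the main obstacle is exactly that last inequality $sign(L_+)\le sign(L_0)$. Cauchy interlacing applied to $V+{}^tV$ and its restriction to $H_1(\Sigma_0)$ only yields $|sign(L_+)-sign(L_0)|\le 1$, one unit short of what the induction needs, and it is precisely the one‑sided bound where the \emph{positivity} of the smoothed crossing has to be used. This is a non‑formal classical fact about the behaviour of the signature under a skein move — provable by a careful analysis of the attached band at the level of the Seifert form, or from monotonicity of the Tristram--Levine signature function, or via a $4$‑dimensional cobordism argument. Everything else (additivity of $sign$ and of the genus, the Stallings--Cromwell minimality statement, and the bookkeeping that keeps a non‑trivial summand in play) is routine once this input is granted.
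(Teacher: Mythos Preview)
The paper does not give its own proof of this theorem: it is quoted with attribution to Przytycki [Pr] and used as background, so there is nothing in the paper to compare your argument against line by line.

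That said, your outline is essentially Przytycki's original argument. He too reduces to a non-split, reduced positive diagram, builds the canonical Seifert surface, and inducts on the crossing number via smoothing a positive crossing; the heart of his paper is precisely the one-sided skein inequality you single out, namely that passing from $L_+$ to $L_0$ at a positive crossing cannot decrease the signature (in his formulation this is packaged together with the nullity, so one should track $\sigma+n$ rather than $\sigma$ alone, but for links arising from positive diagrams the bookkeeping goes through). So your identification of the crux is exactly right.

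Where your write-up falls short of a proof is that you do not actually \emph{prove} that inequality: you list three possible routes (band analysis of the Seifert form, monotonicity of Tristram--Levine signatures, a 4-dimensional cobordism argument) and declare it a ``non-formal classical fact''. Since this is the entire content of Przytycki's theorem --- everything else, as you correctly note, is routine --- what you have written is an accurate roadmap rather than a proof. If you want a self-contained argument, the cleanest is the Seifert-form one: the Seifert matrix $V_0$ for $L_0$ is obtained from $V$ by deleting one row and column, and the deleted diagonal entry of $V+{}^tV$ is strictly negative because the band is positively twisted; a Cauchy-interlacing argument refined by this sign information then yields the one-sided bound.
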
 
So it is interesting to look at the signature of quasipositive braids:
\begin{defn} ([Ru])
	A braid $\beta$ is {\it quasipositive} if it is a product of conjugates of positive braid generators
	\begin{equation}\label{expression de la tresse quasipositive}
	\beta=\prod_{i=1}^k\gamma_{i}\sigma_{j_i}\gamma_{i}^{-1}
	\end{equation}
\end{defn}
If we close the  quasipositive $N$-braid (\ref{expression de la tresse quasipositive}) in a link $\hat{\beta}$, Rudolph proved ([Ru]) that
\begin{equation}\label{quasipositif.link}
\chi_4(\hat{\beta})=N-k
\end{equation}
where $\chi_4$ denotes the largest Euler characteristic of a {\it smooth} surface in $\mathbb{B}^4$ bounded by $\hat{\beta}$.\\
Tanaka ([Ta]) constructed examples of quasipositive braids with zero signature and we will give some more here.

\subsection{Transient sets in the braid groups from quasimorphisms}
 We recall that 
a {\it quasimorphism} on a group $G$ is a function $\Phi:G\longrightarrow \mathbb{R}$ such that there exists a positive constant $D_\Phi$ with
\begin{equation}\label{quasimorphisme}
\forall a,b\in G,\ |\Phi(ab)-\Phi(a)-\Phi(b)|\leq D_\Phi
\end{equation}
where $D_\Phi$ is called the {\it defect} of $\Psi$.\\
Quasimorphisms give exemples of transient sets in the braid groups:
\begin{thm}\label{malyutin}
	([Mal]) Let $G$ be a countable group and $\mu$ a nondegerate probability on $G$ (see Definition \ref{non degenerate}). If  a subset $S$ of $G$ has bounded image under an unbounded quasimorphism $\Phi$, then the probability that the right random $\mu$-walk hits $S$ at the $k$-th step tends to zero as $k$ tends to infinity.
\end{thm}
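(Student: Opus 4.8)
The plan is to replace $\Phi$ by a homogeneous quasimorphism, extract a linear drift $\ell$ from a subadditive ergodic theorem, dispose of the case $\ell\neq 0$ by almost sure escape, and meet the real difficulty in the case $\ell=0$, where one needs an anti-concentration estimate for $\psi(w_k)$.

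First I would pass to the homogenization $\bar\Phi(g)=\lim_{m\to\infty}\Phi(g^m)/m$: this limit exists, $\bar\Phi$ is again a quasimorphism with $\bar\Phi(g^m)=m\,\bar\Phi(g)$, and one has the standard bound $\sup_{g\in G}|\Phi(g)-\bar\Phi(g)|\le D_\Phi$. Consequently $\bar\Phi$ is again unbounded (else $\Phi$ would be bounded) and $\bar\Phi(S)$ is again bounded, say contained in $[-C,C]$. So it is enough to show: for every nonzero homogeneous quasimorphism $\psi$ on $G$, every $C>0$, and every nondegenerate $\mu$ — which I will take finitely supported, as in the applications, so that all moments are finite — one has $\mu^{\star k}\big(\{g:|\psi(g)|\le C\}\big)\to 0$ as $k\to\infty$.

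Write $w_k=x_1x_2\cdots x_k$ with $x_i$ i.i.d.\ of law $\mu$. The quasimorphism inequality makes $a_n:=\psi(w_n)+D_\psi$ subadditive along the shift, and $a_1$ is integrable since $\mu$ is finitely supported; Kingman's subadditive ergodic theorem (the shift being ergodic) gives a deterministic constant $\ell$ with $\psi(w_n)/n\to\ell$ almost surely and in $L^1$. If $\ell\neq 0$ then $|\psi(w_k)|\to\infty$ almost surely, so the indicator $\mathbf 1[\,|\psi(w_k)|\le C\,]$ tends to $0$ almost surely and, being bounded, also in expectation; this settles the nonzero drift case.

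The case $\ell=0$ is the crux: $\psi(w_k)$ then grows sublinearly, and one must still preclude that its law keeps a fixed positive fraction of its mass in the bounded slab $[-C,C]$. Here I would invoke the central limit theorem for quasimorphisms along random walks of Bj\"orklund and Hartnick: for finitely supported $\mu$, $\psi(w_k)/\sqrt k$ converges in distribution to $\mathcal N(0,\sigma^2)$ for some $\sigma^2\ge 0$. When $\sigma^2>0$ the limit has no atom at $0$, whence $\mu^{\star k}(\{|\psi|\le C\})=P\big(|\psi(w_k)|/\sqrt k\le C/\sqrt k\big)\to 0$, as required. It then remains to exclude $\sigma^2=0$. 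Since $\psi\neq 0$ and $\operatorname{supp}\mu$ generates $G$ as a semigroup, there are a time $m_0$ and an element $g$ with $P(w_{m_0}=g)>0$ and $\psi(g)=c>0$; inserting independent blocks of $m_0$ steps all equal to $g$ (or to $g^{-1}$) and iterating a renewal argument shows that $\psi(w_k)$ cannot remain bounded with positive probability, and a quantitative version of this — together with the classical (local) central limit theorem in the subcase where $\psi$ happens to be a genuine homomorphism on $G$ — rules out the degenerate variance. I expect this last part, i.e.\ the anti-concentration of $\psi(w_k)$ in the zero-drift regime, to be the technical heart: $\psi(w_k)$ is \emph{not} a sum of i.i.d.\ increments, since the quasimorphism error is only bounded per step and a priori accumulates linearly, so the $\sqrt k$-spreading genuinely rests on the ergodic/martingale cancellation underlying the Bj\"orklund--Hartnick theorem.
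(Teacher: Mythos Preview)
The paper does not contain a proof of this theorem: it is quoted verbatim from Malyutin~[Ma] as background in \S1.5, and is not even used in the proofs of the paper's own results (Theorem~\ref{prop sur les transients} is proved by reduction modulo~$p$ and a point-counting estimate, not via quasimorphisms). There is therefore no ``paper's own proof'' to compare your proposal against.

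For what it is worth, your outline is a reasonable route to Malyutin's result and is close in spirit to the Bj\"orklund--Hartnick machinery you cite. Two comments. First, you silently strengthen the hypothesis to finitely supported~$\mu$; Malyutin's statement as quoted only assumes nondegeneracy, so if you want the theorem as stated you would need at least a first-moment condition, and you should say so explicitly. Second, the genuinely incomplete step is your handling of the degenerate case $\sigma^2=0$: the Bj\"orklund--Hartnick theorem says this occurs precisely when the homogeneous $\psi$ is a homomorphism, and then $\psi(w_k)=\sum_{i=1}^k\psi(x_i)$ is an honest i.i.d.\ sum with mean~$0$ (since $\ell=0$); its variance vanishes only if $\psi$ is constant on $\operatorname{supp}\mu$, hence identically zero by nondegeneracy, contradicting unboundedness. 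That clean dichotomy is what you should write instead of the renewal sketch, which as stated does not obviously yield the anti-concentration you need.
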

We will see below how Gambodau-Ghys's formula ([G-G]) for the signature turns it into a quasimorphism.  Several other link invariants such as $\chi_4$ can be used to define   quasimorphisms on the braid group ([Br], [B-K]).\\
Other exemples of transient sets on the braid groups have been constructed by Ito ([It]).

\section*{Acknowledgements}
We thank Tahl Nowik and Chaim Even-Zohar for reading an earlier draft of this paper, Misha Brandenbursky for very helpful conversation on braids and quasimorphisms and Florence Lecomte for necessary advice on algebraic geometry over finite fields.

\section{The results}
\begin{ass}\label{assump}  The random walks that we consider here are right random $\mu$ walks for a nondegenerate probability $\mu$ on $B(n+1)$. \\
For $B(3)$, we assume moreover that $\mu(\sigma_i)\neq 0$, $\mu(\sigma_i^{-1})\neq 0$ for $i=1,2$.
\end{ass}
\begin{thm}\label{prop sur les transients}
	Let $P$ be a polynomial in $(2l)^2$ variables with coefficients in $\mathbb{Z}$. If $M=(m_{ij})\in Sp(2l,\mathbb{Z})$, we let 
	$P(M)=P(m_{11},...,m_{12l},m_{21},...,m_{22l},m_{2l1},...,m_{2l2l}).$\\
	Suppose that $P$ does not vanish identically on $Sp(2l,\mathbb{Z})$. Then the set
	$$\{\beta\in B(n+1): P(\rho_n(\beta))=0\}$$
	is transient for the right random $\mu$-walk.
\end{thm}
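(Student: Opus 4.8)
The plan is to transfer the random walk from $B(n)$ to the finite symplectic groups $Sp(2l,\mathbb{Z}/p\mathbb{Z})$, where equidistribution of the walk can be combined with point counts over finite fields. Set $Z=\{M\in Sp(2l,\mathbb{Z})\ :\ P(M)=0\}$. Since $P$ has integer coefficients, for every prime $p$ we have $Z\subseteq\{M\ :\ P(M)\equiv 0\ (\mathrm{mod}\ p)\}$; writing $r_p\colon Sp(2l,\mathbb{Z})\to Sp(2l,\mathbb{Z}/p\mathbb{Z})$ for reduction, $\overline Z_p=\{\bar M\in Sp(2l,\mathbb{Z}/p\mathbb{Z})\ :\ P(\bar M)=0\}$, and $\pi_p=r_p\circ\rho_n$, it is therefore enough to prove that $\mu^{\star k}\big(\pi_p^{-1}(\overline Z_p)\big)\to 0$ for a well-chosen infinite family of primes $p$. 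Pushing $\mu$ forward along the homomorphism $\pi_p$ yields a finitely supported probability $\nu_p$ on the finite group $Sp(2l,\mathbb{Z}/p\mathbb{Z})$ with $\mu^{\star k}\big(\pi_p^{-1}(\overline Z_p)\big)=\nu_p^{\star k}(\overline Z_p)$, so the problem becomes one about random walks on finite symplectic groups.

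First I would invoke A'Campo's description of the image ([A'C]): $\rho_n(B(n))$ has finite index in $Sp(2l,\mathbb{Z})$, and in particular it is Zariski dense in the algebraic group $Sp_{2l}$. By strong approximation for the simply connected group $Sp_{2l}$, it follows that $\pi_p$ is onto for all but finitely many primes $p$; fix such a large prime $p$. As $\mu$ is nondegenerate, $\mathrm{supp}(\mu)$ generates $B(n)$, hence $\mathrm{supp}(\nu_p)$ generates $Sp(2l,\mathbb{Z}/p\mathbb{Z})$. For $p$ large this finite group is perfect, and a finitely supported probability whose support generates a finite perfect group is aperiodic — a periodic walk would force the support into a coset of a proper normal subgroup with cyclic quotient — so the walk equidistributes: $\nu_p^{\star k}\to U_p$, the uniform probability on $Sp(2l,\mathbb{Z}/p\mathbb{Z})$, as $k\to\infty$. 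Consequently $\lim_{k\to\infty}\nu_p^{\star k}(\overline Z_p)=U_p(\overline Z_p)=|\overline Z_p|\,/\,|Sp(2l,\mathbb{Z}/p\mathbb{Z})|$.

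It then remains to bound this ratio. The group scheme $Sp_{2l}$ is smooth over $\mathbb{Z}$ with geometrically irreducible fibres; since $P$ does not vanish identically on $Sp(2l,\mathbb{Z})$ and $Sp(2l,\mathbb{Z})$ is Zariski dense in $Sp_{2l}$, the image of $P$ in the coordinate ring of $Sp_{2l}$ is a nonzero regular function, and (that ring being flat over $\mathbb{Z}$) for all large $p$ its reduction mod $p$ is still nonzero, hence does not vanish identically on $Sp_{2l,\mathbb{F}_p}$. Thus $\overline Z_p$ is the set of $\mathbb{F}_p$-points of a proper closed subvariety of $Sp_{2l,\mathbb{F}_p}$, of dimension at most $\dim Sp_{2l}-1$; by the Lang--Weil estimate $|\overline Z_p|\le C\,p^{\dim Sp_{2l}-1}$ with $C$ depending only on $l$ and $\deg P$, while $|Sp(2l,\mathbb{Z}/p\mathbb{Z})|=p^{\dim Sp_{2l}}\prod_{i=1}^{l}(1-p^{-2i})\ge\tfrac12\,p^{\dim Sp_{2l}}$. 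Therefore, for every large prime $p$,
\[
\limsup_{k\to\infty}\ \mu^{\star k}\big(\{\beta\in B(n)\ :\ P(\rho_n(\beta))=0\}\big)\ \le\ \frac{|\overline Z_p|}{|Sp(2l,\mathbb{Z}/p\mathbb{Z})|}\ \le\ \frac{2C}{p},
\]
and letting $p\to\infty$ forces this $\limsup$ to be $0$, which is exactly the transience assertion.

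The main obstacle is the structural input on the image of $\rho_n$: one needs $\rho_n(B(n))$ to be large enough that its reductions mod $p$ are surjective (or at least of index bounded independently of $p$) for infinitely many $p$, and this is precisely where A'Campo's theorem, together with strong approximation for $Sp_{2l}$, enters. The remaining points are routine but must be handled with care: that a nonzero regular function on $Sp_{2l}$ over $\mathbb{Z}$ stays nonzero and cuts out a subvariety of the expected codimension after reduction mod $p$ for $p$ large, and the equidistribution of the walk on $Sp(2l,\mathbb{Z}/p\mathbb{Z})$, for which perfectness of that group rules out the only possible obstruction, namely periodicity. One could alternatively bypass the finite-field analysis entirely by citing Aoun's theorem on the transience of proper algebraic subvarieties under random walks on linear groups whose Zariski closure is semisimple.
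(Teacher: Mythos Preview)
Your argument is correct and follows the same strategy as the paper: push the walk forward to $Sp(2l,\mathbb{F}_p)$ via A'Campo's surjectivity, invoke equidistribution of the induced walk on the finite group, and bound the density of the zero set by $O(1/p)$ via point-counting over finite fields, then let $p\to\infty$. The implementation differs only in secondary choices---the paper passes to $PSp(2l,\mathbb{F}_p)$ and uses its simplicity (for $p>5$) in place of your perfectness argument for $Sp$, and cites Lachaud--Rolland rather than Lang--Weil for the point bound---but the architecture is identical.
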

\subsection{The determinant}\label{resultats determinant}
We will use the expression of the Alexander polynomial in terms of the Burau representation to derive from Theorem \ref{prop sur les transients} the following.

\begin{thm} \label{thm determinant} Let $n$ be an integer and $C$ a real number.
	\begin{enumerate}
		\item
		If $n+1$ is odd, $\{\beta\in B(n+1): |det(\hat{\beta})|<C\}$ is transient
		\item
		If $n+1$ is even, 
		\begin{enumerate}
			\item $\{\beta\in B(n+1): 0<|det(\hat{\beta})|<C\}$ is transient			
			\item $\{\beta\in B(n+1): \beta\ \mbox{closes into a knot and }\ |det(\hat{\beta})|<C\}$ is transient
		\end{enumerate}	
	\end{enumerate}	
\end{thm}
We recall that, given a prime number $p$, a link $L$ is $p$-{\it colorable} if $det(L)$ is divisible by $p$. We derive
\begin{thm} \label{thm color} Let $n\geq 3, k$ be positive integers and let $p\geq 3$ a prime number. Let $P_p(n,k)$ the probability that a $n$-braid obtained by $k$ steps of the right random $\mu$-walk closes in a $p$-colorable link and we let $P_p(n)$ the limit of $P_p(n,k)$ when $k$ tends to infinity.\\
	1) For all $n\geq 3$, $P_p(n)\geq  \dfrac{p}{p^2-1}+\dfrac{17}{40p^2}$\\
	2) If $n$ is odd, 	$P_p(n)\leq  \dfrac{p}{p^2-1}+\dfrac{9}{5p^2}$
\end{thm}
\subsection{The signature of $3$-braids}
Ghys-Gambodau ([G-G]) use the integral Bureau representation to give a formula for the signature of links given by closures of braids. We derive
\begin{prop}\label{thm sur les random walks}
	For a random walk as in Assumption \ref{assump}, the set
	\begin{equation}\label{definition de l'ensemble S}
	S=\{\beta\in B(3): \exists n\in\mathbb{N}, \ \exists i,j\in\{1,2\}\  \mbox{such that}\ \ \  
	sign\big((\beta\sigma_i\beta^{-1}\sigma_j)^n\big)\neq 0\}
	\end{equation}
		is transient. 
\end{prop}
The $(\beta\sigma_i\beta^{-1}\sigma_j)^n$'s are quasipositive, thus
 (\ref{quasipositif.link})  tells us that for every $n$, $\chi_4 \big((\beta\sigma_1\beta^{-1}\sigma_1)^n\big)=3-2n$; in particular, if $(\beta\sigma_1\beta^{-1}\sigma_1)^n$ closes into a knot, $g_4((\beta\sigma_1\beta^{-1}\sigma_1)^n)=n-1$.\\
\\
In \S \ref{rappels sur les lissajous toriques}, we recall the Lissajous toric knots with $3$ strands (cf. [S-V]); they are naturally of the form $(\beta\sigma_2\beta^{-1}\sigma_1^{\pm 1})^n$ and we prove
\begin{prop}\label{signature de lissajous}
	A Lissajous toric knot $K(3,p,q)$ has zero signature unless it is isotopic to a torus knot. 
\end{prop} 
We conclude by statistical estimates, done with Sagemath.\\
\\
These results suggest  possible generalizations.
\begin{ques}
	Let $n\in\mathbb{N}$ and let $\mathcal I$ be a numerical link invariant which is unbounded on the closures of the links represented by a $n$-braid. If $C$ is a constant, is the set $\{{\mathcal I}(\hat{\beta})\leq C\}$  transient for the right random $\mu$-walks?
\end{ques}

  \begin{ques}For an integer $m$, we set $$A_m^{(1)}=\prod_{1\leq 2i+1\leq m}\sigma_{2i+1}\ \ \ \ A_m^{(2)}=\prod_{1\leq 2i\leq m}\sigma_{2i}$$
  	Is the following set $S_m$ transient in $B(m+1)$ for the right random $\mu$-walks?
  	$$S_m=\{\beta\in B(m+1)\slash \exists n\in\mathbb{N}, \exists i,j\in\{1,2\}\ \  \mbox{such that}\ \ \  
  	sign\big((\beta A_m^{(i)}\beta^{-1} A_m^{(j)})^n\big)\neq 0\}$$
 \end{ques}

\section{Random walks: proof of Theorem \ref{prop sur les transients}}
Similarly to Rivin in [Ri], we introduce the finite groups $Sp(2l,\mathbb{Z}_p)$'s for a prime number $p\geq 3$.\\
A theorem of A'Campo  ([A'C], Theorem 1 (1)) shows that the representation $B(n)\longrightarrow Sp(2l,\mathbb{F}_p)$  is surjective. So we consider the surjective map
\begin{equation}\label{representation symplectique}
\Pi_p:B(n)\overset{\rho_n}{\longrightarrow} Sp(2l,\mathbb{Z})
\longrightarrow Sp(2l,\mathbb{F}_p)
\end{equation}   
\begin{lem}\label{uniforme sur Psp} 
	Let $p\geq 3$, Consider the image probability $\Pi_p\mu$ of $\mu$ via $\Pi_p$ on $Sp(2l,\mathbb{F}_p)$, i.e. $\Pi_p(X)=\mu(\Pi_p^{-1}(X))$.  If $m$ tends to infinity, $(\Pi_p\mu)^{\star m}$ converges to the equidistributed measure on $Sp(2l,{\mathbb{F}_p})$. 
\end{lem}
\begin{proof}
The lemma follows from the following two theorems. 
\begin{thm}\label{groupe fini}
	(see for exemple [Di])
	Let  $G$ be a finite group and $\mu$ a probability measure on $G$ such that 
	\begin{enumerate}
		\item 
$G$ is generated as a semigroup by 	$supp(\mu)$
\item $supp(\mu)$ is not included in a coset of $G$ by a non trivial normal subgroup $H$ of $G$.
\end{enumerate}
  Then $\mu^{\star m}$ converges to the equidistributed measure on $G$ as $m$ tends to infinity.
\end{thm}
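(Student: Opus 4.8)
The plan is to recognise $\mu^{\star m}$ as the distribution after $m$ steps of the finite-state Markov chain on $G$ with transition matrix $P(g,h)=\mu(gh^{-1})$, and then to invoke the fundamental convergence theorem for finite ergodic chains: an irreducible aperiodic Markov chain on a finite state space converges to its unique stationary distribution. The entire proof then reduces to three verifications, namely that the equidistributed measure is stationary, that hypothesis (1) forces irreducibility, and that hypothesis (2) forces aperiodicity, after which equidistribution is automatic.

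First I would check that $P$ is doubly stochastic: since $h\mapsto gh^{-1}$ is a bijection of $G$ and $\mu$ is a probability measure, every row and every column of $P$ sums to $1$. Consequently the uniform measure $U(g)=1/|G|$ satisfies $\sum_g U(g)P(g,h)=U(h)$, so $U$ is stationary. Next, irreducibility: starting from any $g$, after $k$ steps the chain can be at any element of the form $g\,s_1\cdots s_k$ with $s_i\in\operatorname{supp}(\mu)$, and hypothesis (1), that $\operatorname{supp}(\mu)$ generates $G$ as a semigroup, says exactly that these products exhaust $G$; hence every state is reachable from every other and the chain is irreducible. In particular the stationary measure is unique and therefore equals $U$.

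The heart of the argument, and the step I expect to be the main obstacle, is extracting aperiodicity from hypothesis (2). Let $d$ be the common period of the irreducible chain, and recall the standard structural fact that for a periodic irreducible chain the state space splits into cyclic classes on which the chain advances cyclically; for a group walk this means that the length modulo $d$ of a product of generators depends only on the resulting element. The delicate point is to establish precisely this well-definedness, using that a left translate of a valid path is again a valid path. Granting it, $g\mapsto(\text{length mod }d)$ is a well-defined surjective homomorphism $\phi:G\to\mathbb{Z}/d\mathbb{Z}$, its kernel $H$ is a normal subgroup of index $d$, and since each generator has length $1$ one gets $\operatorname{supp}(\mu)\subseteq\phi^{-1}(1)$, a single coset of $H$. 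If $d>1$ this exhibits $\operatorname{supp}(\mu)$ inside a coset of a proper normal subgroup, contradicting hypothesis (2); hence $d=1$ and the chain is aperiodic. This conversion of combinatorial periodicity into the algebraic coset condition is the crux.

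With irreducibility, aperiodicity, and the stationarity of $U$ in hand, the convergence theorem yields $P^m(g,\cdot)\to U$ for every $g$; taking $g=e$ gives $\mu^{\star m}\to U$, which is the claim. I would also remark that an equivalent route, closer in spirit to the reference [Di], is the Fourier one: writing $\widehat{\mu^{\star m}}(\rho)=\hat\mu(\rho)^m$ for each irreducible unitary representation $\rho$, one shows via the equality case of the triangle inequality, together with hypotheses (1) and (2), that $\hat\mu(\rho)$ has spectral radius strictly below $1$ for every nontrivial $\rho$, so that all nontrivial Fourier modes decay and Fourier inversion forces $\mu^{\star m}\to U$. Here the same obstacle reappears, now as the task of ruling out unimodular eigenvalues of $\hat\mu(\rho)$, which again boils down to forbidding $\operatorname{supp}(\mu)$ from lying in a coset of a proper normal subgroup.
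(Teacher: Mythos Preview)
The paper does not actually prove this theorem: it is quoted as a known result with a reference to Diaconis [Di], and then immediately applied to $PSp(2l,\mathbb{F}_p)$. So there is no ``paper's own proof'' to compare against; your proposal supplies a proof where the authors only give a citation.

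That said, your argument is correct and is the standard one. The identification of $\mu^{\star m}$ with the $m$-step distribution of the Cayley walk, the double stochasticity of $P$, irreducibility from hypothesis~(1), and the reduction of aperiodicity to hypothesis~(2) via the period homomorphism $G\to\mathbb{Z}/d\mathbb{Z}$ are all right; the ``delicate point'' you flag---that the length modulo $d$ of a word depends only on its value in $G$---follows from irreducibility in the usual way (any two hitting times of $g$ differ by a return time to $e$, hence by a multiple of $d$). The alternative Fourier route you sketch is in fact the one emphasised in [Di]: one shows that for every nontrivial irreducible $\rho$ the operator $\hat\mu(\rho)$ has spectral radius $<1$, the obstruction to this being exactly a coset containment of $\operatorname{supp}(\mu)$. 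Either route is acceptable here; the Markov-chain version you lead with is perhaps the more elementary, while the Fourier version gives quantitative rates and is closer to the cited reference.
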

\begin{thm}\label{psl simple} ([As]) If $n> 2$ or $p> 3$, the group $PSp(2n,{\mathbb{F}_p})$ is simple.
	\end{thm} 
If $H$ is a non trivial normal subgroup of $Sp(2n,\mathbb{F}_p)$, 
Theorem \ref{psl simple} tells us that it maps into $\{Id\}$, hence $H$ is included into $\{kId: k\in \mathbb{F}_p\}$. Thus $supp(\mu)$ cannot be a  coset $gH$ and Theorem \ref{groupe fini} applies.\\
We now show that if $n=2$ and $p=3$ we can also apply Theorem \ref{groupe fini}. We recall that the only non trivial normal subgroup of $PSL(2,\mathbb{F}_3)$ is the Klein group  so the only normal subgroups of $SL(2,\mathbb{Z})$ are 
\begin{itemize}
	\item the center $\{\pm Id\}$
	\item 
	$H=\{\pm Id, \pm\begin{pmatrix}
	0 & 1 \\
	-1 & 0 
	\end{pmatrix}, \pm\begin{pmatrix}
	-1 & -1 \\
	-1 & 1 
	\end{pmatrix}, \pm\begin{pmatrix}
	-1 & 1 \\
	1 & 1 
	\end{pmatrix}\}$
	\end{itemize}
Now for the reduced Burau representation ${\mathcal B}_{-1}:B(3)\longrightarrow SL(2,\mathbb{Z})$, we have
\begin{equation}\label{les generateurs}
s_1={\mathcal B}_{-1}(\sigma_1)=\begin{pmatrix}
1 & 0 \\
-1 & 1 
\end{pmatrix}\ \ \ \ s_2={\mathcal B}_{-1}(\sigma_2)=\begin{pmatrix}
1 & 1 \\
0 & 1 
\end{pmatrix}
\end{equation}
The support of $\Pi_p\mu$ contains the matrices $s_1$ and $s_2^{-1}$. Suppose there exists $g\in SL(2,\mathbb{F}_3)$ such that $s_1$ and $s_2^{-1}$ belong to $gH$; then $s_2s^{-1}=ghg^{-1}$ for some $h\in H$, hence $(s_2s_1^{-1})^4=Id$, which is not true. Thus the assumptions of Theorem \ref{groupe fini} is verified.
\end{proof}
We now investigate the density of the zero set in $Sp(2l, \mathbb{F}_p)$ of the polynomial  $P$ appearing in Theorem \ref{prop sur les transients}.\\
We write $P$ in terms of all the multi-indices, $P(M)=\sum a_I m^I$, where the $a_I$'s belong to $\mathbb{Z}$. If $p$ is an integer, we denote by $P_p$ its reduction modulo $p$. Since $P$ is not identically zero, we derive an infinite set $\mathcal P$ of prime numbers $p$ such that $P_p$ is not identically zero.  To prove the theorem, we need to estimate
\begin{equation}
\frac{|P_p^{-1}(0)|}{|Sp(2l, \mathbb{F}_p)|}
\end{equation}
We recall (see for example [O'M] or [wi]) that 
\begin{equation}\label{cardinal de Sp}
|Sp(2l,\mathbb{F}_p)|=\prod_{m=1}^l\big[(p^{2m}-1)p^{2m-1}\big]\geq \frac{1}{2^{l}}\prod_{s=1}^{2l}p^{s}
= \dfrac{p^{2l^2+l}}{2^{l}}
\end{equation}
To estimate $|P_p^{-1}(0)|$, we use the following result by Lachaud and Rolland 
\begin{thm}
	([L-R]) Let $K$ be the algebraic closure of $\mathbb{F}_p$. Let $X$ be an algebraic variety of $K^n$ of dimension $m$ which is the zero set of a family of polynomials $(f_1,...,f_r)$. Let $d_i=deg(f_i)$. Then 
	$$|X\cap \mathbb{F}_p^n|\leq d_1...d_rp^m$$
\end{thm}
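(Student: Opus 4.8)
The plan is to prove the bound by induction on the number of ambient variables $n$, by cutting down the count of $\mathbb{F}_q$-rational points to lower-dimensional slices. (Here I write $q$ for the size of the base field, so that the bound $d_1\cdots d_r\,q^m$ is a count of $\mathbb{F}_q$-points $|X\cap\mathbb{F}_q^n|$.) It is convenient to prove the slightly more flexible assertion: for every $m$ and every, possibly reducible, closed subvariety $X\subseteq\mathbb{A}^n$ with $\dim X\le m$, one has $|X\cap\mathbb{F}_q^n|\le(\deg X)\,q^m$, where $\deg X$ denotes the sum of the degrees of the irreducible components of $X$. The theorem as stated then follows from the affine Bézout inequality $\deg V(f_1,\dots,f_r)\le d_1\cdots d_r$: since $\dim X=m$, every component contributes a factor $q^{\dim}\le q^m$, giving $|X\cap\mathbb{F}_q^n|\le(d_1\cdots d_r)\,q^m$.

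For the inductive claim the base case $n=1$ is immediate: a proper subvariety of $\mathbb{A}^1$ is a finite set whose cardinality equals its degree, while $\mathbb{A}^1$ itself has $q=1\cdot q^1$ points. For the inductive step I would fibre $\mathbb{A}^n$ over the last coordinate $x_n$ and split the irreducible components of $X$ into two groups: those on which $x_n$ is non-constant, whose union I call $X''$, and those contained in some hyperplane $\{x_n=c\}$, whose union I call $X'$. This split is the decisive device, since the two kinds of component behave very differently under slicing and must be treated by different reductions.

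For $X''$, intersecting with $\{x_n=a\}$ for each $a\in\mathbb{F}_q$ produces a subvariety $X''_a\subseteq\mathbb{A}^{n-1}$ of dimension at most $m-1$ — each component drops in dimension because it is not contained in the slicing hyperplane — and of total degree at most $\deg X''$, since a proper hyperplane section does not increase the degree of a component. Applying the induction hypothesis in $\mathbb{A}^{n-1}$ (valid there for every dimension) bounds $|X''_a\cap\mathbb{F}_q^{n-1}|$ by $(\deg X'')\,q^{m-1}$, and summing over the $q$ values of $a$ gives $|X''\cap\mathbb{F}_q^n|\le(\deg X'')\,q^m$. For $X'$, a component contained in $\{x_n=c\}$ carries no $\mathbb{F}_q$-point unless $c\in\mathbb{F}_q$, in which case it may be regarded as a subvariety of $\mathbb{A}^{n-1}$ of dimension at most $m$, so the same induction hypothesis applies directly; summing over components gives $|X'\cap\mathbb{F}_q^n|\le(\deg X')\,q^m$. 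Adding the two contributions and using $\deg X=\deg X'+\deg X''$ closes the induction.

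The main obstacle is the intersection-theoretic input rather than the combinatorics of the recursion. Two geometric facts must be secured: first, that the hyperplane section $\{x_n=a\}$ of a component not contained in it drops its dimension and does not raise its degree; and second, the affine Bézout inequality $\deg V(f_1,\dots,f_r)\le\prod_i d_i$ bounding the total degree of all components of the zero set. The latter is the genuinely non-elementary ingredient, and I would invoke it as a known bound; everything else is an organised bookkeeping of $\mathbb{F}_q$-points across the fibres of the projection $x_n$.
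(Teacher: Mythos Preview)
The paper does not prove this statement: it is quoted as a result of Lachaud and Rolland [L-R] and used as a black box in the proof of Theorem~\ref{prop sur les transients}. There is therefore no in-paper proof to compare your proposal against.

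That said, your argument is correct and is the standard route to this bound. The reduction to the degree estimate $|X(\mathbb{F}_q)|\le(\deg X)\,q^{\dim X}$ via the affine B\'ezout inequality, followed by an induction on the ambient dimension through hyperplane slicing (splitting components according to whether the slicing coordinate is constant on them), is exactly the classical mechanism. You have correctly isolated the one genuinely nontrivial ingredient, the affine B\'ezout bound $\deg V(f_1,\dots,f_r)\le d_1\cdots d_r$ on the total degree of the components; everything else is clean bookkeeping over the fibres of the projection.
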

We view $P_p$ as a polynomial with coefficients in $K$ and let 
\begin{equation}
X=Sp(2l,K)\cap P_p^{-1}(0)\subset \mathbb{F}_p^{4l^2}
\end{equation} We know that for a field $K$, $Sp(2l,K)$ is irreducible as an algebraic variety ([Mi]), hence $X$ is of dimension strictly smaller that $Sp(2l,K)$, i.e. $dim(X)\leq 2l^2+l-1$.  Thus, for some constant $C_1(2l,P)$ depending on $2l$ and on the degree of $P$,
\begin{equation}\label{application de lachaud-rolland}
|X\cap \mathbb{F}_p^{4l^2}|\leq C_1(2l,P) p^{2l^2+l-1}
\end{equation} 
Putting (\ref{cardinal de Sp}) and (\ref{application de lachaud-rolland}) together, we derive that  
\begin{equation}\label{estimee de conclusion}
\dfrac{|X|}{|Sp(2l, \mathbb{F}_p)|}\leq \dfrac{C_2(2l,P)}{p}
\end{equation}
for some constant $C_2(2l,P)$. Thus, given a $\epsilon>0$, we pick a prime number $p$ in $\mathcal P$ such that 
\begin{equation}
2\dfrac{C_2(2l,P)}{p}<\epsilon
\end{equation}
There exists an integer $k_0$ such that for every $k>k_0$, we have $$\mu^{\star k}(P^{-1}(0))\leq (\Pi_p\mu)^{\star k}(P_p^{-1}(0))<\epsilon.$$ 
\qed
\section{The determinant} 
We recall that the Alexander polynomial of the closure of a braid $\beta\in B(n+1)$ can be expressed in terms of the Burau representation $\mathcal B$ ([K-T]),
\begin{equation}\label{burau et alexander}
\Delta_{\hat{\beta}}(t)=\frac{1-t}{1-t^{n+1}}det({\mathcal B}_{-t}(\beta)-I)
\end{equation}
We derive the proof of Theorems \ref{thm determinant} and \ref{thm color}.
\subsection{Proof of Theorem \ref{thm determinant}}\label{proof determinant}
	Case 1, where $n+1$ is odd, follows immediately from Theorem. \ref{prop sur les transients}.\\
	So we assume that $n+1$ is even (case 2). For $\beta\in B(n+1)$, we let $P_\beta$ the characteristic polynomial of $\mathcal B_{-1}(\beta)$. Since  $\mathcal B_{-1}(\beta)(X)=X$, we have $P_\beta(1)=0$.
	\begin{lem}
		There exist $\beta\in B(n+1)$ such that $P'(1)\neq 0$.
	\end{lem}
\begin{proof}
A'Campo ([A'C]) proved that $\rho(B(n+1))$ contains the congruence subgroup $2$ of $Sp(2l,\mathbb{Z})$. Thus there is a braid $\beta$ with $\rho(\beta)$ is the matrix given by the blocks $\begin{pmatrix}-1&0\\2&-1\end{pmatrix}$ and its characteristic polynomial is $(1+X)^{\frac{n+1}{2}}$ thus $P_\beta(X)=(X-1)(1+X)^{\frac{n+1}{2}}$.\end{proof}
It follows that $\beta\mapsto P_\beta'(1)$ is not identically zero. Thus Theorem \ref{prop sur les transients} tells us that the set $Z$ of $n$-braids $\beta$ such that $P_\beta(X)$ does not have a factor with multiplicity greater than $1$ is recurrent. \\
For each braid $\beta$ in $Z$, there is a subspace $H_\beta$ stable by ${\mathcal B}_{-1}(\beta)$ with $\mathbb{R}^{n}=\mathbb{R}K\oplus H_\beta$, so we can complete $K$ into a basis $e_2,...,e_{n}$ such that the matrix $M(\beta))$ of ${\mathcal B}_{-1}(\beta)$ verifies 
	\begin{itemize}
		\item
		$M_{-1}(\beta)_{11}=1$
		\item if $i\neq 1, M_{-1}(\beta)_{1i}=M_{-1}(\beta)_{i1}=0$
	\end{itemize}
	The matrix $A=(M_{-1}(\beta))_{i,j\geq 2}$ is the matrix of $\rho_n(\beta)$ in the basis $\bar{e}_2,...,\bar{e}_{n}$ where $\bar{e}_i$ is the image of $e_1$ in $\mathbb{R}^{n}/\mathbb{R}K$.\\
	We let $M_{-1+s}(\beta)$ be the matrix of ${\mathcal B}_{-1+s}$ in the basis $(K,e_2,...,e_{n})$. Its coefficients are rational fractions with integer coefficients in $s$. Hence, for each $i,j$, there exists an integer $a_{ij}$ such that
	\begin{equation}
	M_{-1+s}(\beta)_{ij}=M_{-1}(\beta)_{ij}+a_{ij}s+{\mathcal O}(s^2)
	\end{equation}
	We compute $det(M_{-1+s}(\beta)-I)$ by developing w.r.t. the first column or first line and get
	\begin{equation}
	det(M_{-1+s}(\beta)-I)=sa_{11}det(\rho_n(\beta)-Id)+{\mathcal O}(s^2)
	\end{equation}
	\begin{equation}\label{determinant impair}
	det(\hat{\beta})=\Delta_{\hat{\beta}}(-1)=\lim_{s\longrightarrow 0}\Delta_{\hat{\beta}}(-1+s)=\dfrac{2}{n+1}a_{11}det(\rho_n-Id)
	\end{equation}
	Thus, if $0<|det(\hat{\beta})|\leq C$, $a_{11}\neq 0$ and  
	$$|det(\rho_n(\beta)-Id)|<|\dfrac{C(n+1)}{2a_{11}}|\leq |C(n+1)|$$
	so Theorem \ref{prop sur les transients} applies and this proves 2 (a). To prove (b), we recall
	\begin{itemize}
		\item 
		the determinant of a knot is never zero ([Ro])
		\item
		the probability of a $(n+1)$-braid closing to a knot is asymptotically $\dfrac{1}{n+1}$ ([Ma]) 
	\end{itemize}

\subsection{Proof of Theorem \ref{thm color}}\label{proof colorable} 
A $(n+1)$-braid closes in a  $p$-colorable link iff $p$ divides $\Delta_{-1}(\beta)$ (\S \ref{definition de la signature} and \S \ref{resultats determinant}). \begin{enumerate}
	\item 
 If $n+1$ is odd, (\ref{burau et alexander}) tells us that $\hat{\beta}$ is $p$-colorable if and only if $p$ divides $det(\rho(\beta)-Id)$. 
 \item If $n+1$ is even, we do not have an equivalence but derive from (\ref{determinant impair}): if $p$ divides $det(\rho(\beta)-Id)$ then $\hat{\beta}$ is $p$-colorable. 
\end{enumerate}
\begin{lem}\label{estimee de tn}
	We let
\begin{itemize}
	\item $T(l,p)=\#\{M\in Sp(2l, \mathbb{F}_p): 1\ \mbox{is an eigenvalue of }\ M\}$
	\item $t_l=\dfrac{T(l,p)}{\# Sp(2l, \mathbb{F}_p)}$ if $n>0$ and $t_0=0$
\end{itemize}
Then $\dfrac{p}{p^2-1}+\dfrac{17}{40p^2}\leq t_l\leq \dfrac{p}{p^2-1}+\dfrac{9}{5p^2}$
\end{lem}
\begin{proof}
	The proof is sketched in [A-H].  We recall
	\begin{prop}([Ac]) Let $S(l,p)=\{M\in Sp(2l, \mathbb{F}_p): M\ \mbox{is unipotent}\}$ Then $\# S(l,p)=p^{2l^2}$
\end{prop}
We count the elements of $T(l,p)$ according to the dimensions of the generalized eigenspaces of $1$ and get
$$\# T(l,p)=\sum\limits_{\substack{1\leq r\leq l\\ 
		r+s=l}}\dfrac{\# Sp(2l, \mathbb{F}_p)}{\# Sp(2r, \mathbb{F}_p)\# Sp(2s, \mathbb{F}_p)}S(r,p)(\# Sp(2s, \mathbb{F}_p)-T(s,p))$$
\begin{equation}\label{formule pour tn}
t_l=\sum\limits_{\substack{1\leq r\leq l\\ 
		r+s=l}}\dfrac{p^{2r^2}}{\# Sp(2r, \mathbb{F}_p)}(1- t_s)
=\dfrac{p}{p^2-1}+\sum\limits_{\substack{2\leq r\leq l \\ 
		r+s=n}}\dfrac{1}{p^r\prod\limits_{m=1}^r\Big(1-p^{-2m}\Big)}(1- t_s)
\end{equation}
We easily prove that, if $r\geq 2$ and $p\geq 3$, $ln\Big(\dfrac{1}{1-p^{-2m}}\Big)\leq \dfrac{9}{8}p^{-2m}$\\
\[\mbox{thus}\ ln\Big(\prod\limits_{m=1}^r\dfrac{1}{1-p^{-2m}}\Big)
\leq \dfrac{9}{8}\sum_{m=1}^rp^{-2m}=\dfrac{9}{8}\Big(\dfrac{1-p^{-2r}}{p^2-1}\Big)\leq \dfrac{9}{64}\ \mbox{and (\ref{formule pour tn}) yields}\] 
\begin{equation}\label{majore tn}
t_l\leq \dfrac{p}{p^2-1}+\dfrac{1}{p^2}e^{\frac{9}{64}}\sum_{a=0}^{l-2}\dfrac{1}{p^a}\leq 
\dfrac{p}{p^2-1}+\dfrac{3}{2p^2}e^{\frac{9}{64}}\leq \dfrac{p}{p^2-1}+\dfrac{9}{5p^2}
\end{equation}
We plug (\ref{majore tn}) into (\ref{formule pour tn}) and derive
$$
t_l\geq \dfrac{p}{p^2-1}+\dfrac{1}{p^2}\underbrace{(1-\dfrac{p}{p^2-1}-\dfrac{9}{5p^2})}_{\mbox{ minimum at p=3}}\geq \dfrac{p}{p^2-1}+\dfrac{17}{40p^2} 
$$
\end{proof}
The  theorem follows from Lemma \ref{estimee de tn} and the discussion at the beginning of \S \ref{proof colorable}.
\section{The signature of $3$-braids} 
\subsection{Preliminaries: the Gambaudo-Ghys formula for the signature of a $3$-braid ([G-G])}\label{section: formule de gg} 
We use the generators $s_1$, $s_2$ of $SL(2,\mathbb{Z})$ and the expression in (\ref{les generateurs}) for $\mathcal B_{-1}$. 
\subsubsection{The Meyer cocycle and the formula for the signature}\label{meyer}
\label{section sur le meyer cocycle}
For two $3$-braids, $\alpha$ and $\beta$, [G-G] proves  
\begin{equation}\label{formule de gg}
sign(\widehat{\alpha.\beta})=
sign(\hat{\alpha})+sign(\hat{\beta})-
Meyer({\mathcal B}_{-1}(\alpha),{\mathcal B}_{-1}(\beta))
\end{equation}
where $Meyer\in H^2(SL(2,\mathbb{R}))$ is the Meyer cocycle defined as follows.
If $\gamma_1,\gamma_2\in SL(2,\mathbb{Z})$, we let
\begin{equation}\label{e.gamma.un.gamma.deux}
E_{\gamma_1,\gamma_2}=Im(\gamma_1^{-1}-Id)\cap Im(\gamma_2-Id)
\end{equation}
For a vector $e$ in $E_{\gamma_1,\gamma_2}$, we take $v_1,v_2$ in $\mathbb{R}^2$ such that 
$$e=\gamma_1^{-1}(v_1)-v_1=v_2-\gamma_2(v_2)$$
and define the quadratic form 
\begin{equation}\label{e.gamma.un.gamma.deux}
q_{\gamma_1,\gamma_2}=\Omega(e,v_1+v_2)
\end{equation}
where $\Omega$ is the standard symplectic form on $\mathbb{R}^2$.
Then $Meyer (\gamma_1,\gamma_2)$ is the signature of $q_{\gamma_1,\gamma_2}$.

\begin{fact}\label{meyer pour hyperboliques}
	If $\gamma\in SL(2,\mathbb{Z})$ is hyperbolic (i.e. $|tr(\gamma)|>2$), then for two positive integers $a,b$, $$Meyer(\gamma^a,\gamma^b)=0$$
\end{fact}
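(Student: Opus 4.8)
The plan is to show that when $\gamma$ is hyperbolic the quadratic form $q_{\gamma^a,\gamma^b}$ is in fact defined on all of $\mathbb{R}^2$ and, in an eigenbasis of $\gamma$, is ``purely off--diagonal'', hence automatically of signature zero.

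First I would note that $|tr(\gamma)|>2$ forces the eigenvalues of $\gamma$ to be real numbers $\lambda,\lambda^{-1}$ with $|\lambda|>1$, so $1$ is not an eigenvalue of any nonzero power of $\gamma$. Therefore $\gamma^{-a}-Id$ and $\gamma^{b}-Id$ are both invertible, which gives $E_{\gamma^{a},\gamma^{b}}=Im(\gamma^{-a}-Id)\cap Im(\gamma^{b}-Id)=\mathbb{R}^2$; moreover the vectors $v_1=(\gamma^{-a}-Id)^{-1}(e)$ and $v_2=(Id-\gamma^{b})^{-1}(e)$ are \emph{uniquely} determined by $e$, so $q_{\gamma^{a},\gamma^{b}}$ is a genuine quadratic form on $\mathbb{R}^2$ with no ambiguity in its definition.

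Next, since the standard symplectic form $\Omega$ is $SL(2,\mathbb{R})$--invariant and the whole construction of $q_{\gamma_1,\gamma_2}$ is natural under simultaneous conjugation of $\gamma_1,\gamma_2$, I may conjugate in $SL(2,\mathbb{R})$ and assume $\gamma=\mathrm{diag}(\lambda,\lambda^{-1})$, with $e_1,e_2$ the standard basis and $\Omega(e_1,e_2)=1$. Then $\gamma^{a},\gamma^{b}$ and the operators $(\gamma^{-a}-Id)^{-1}$, $(Id-\gamma^{b})^{-1}$ are all diagonal, so writing $e=xe_1+ye_2$ one gets $v_1+v_2=\alpha\,x\,e_1+\beta\,y\,e_2$ for explicit scalars $\alpha,\beta$ depending only on $\lambda,a,b$, and a one--line computation yields
\[
q_{\gamma^{a},\gamma^{b}}(e)=\Omega(e,v_1+v_2)=(\beta-\alpha)\,xy .
\]
In particular the ``diagonal'' values $q(e_1)$ and $q(e_2)$ vanish, simply because for $e=e_i$ the vector $v_1+v_2$ is again a multiple of $e_i$ and $\Omega(e_i,e_i)=0$.

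Finally, a quadratic form of the shape $c\,xy$ on $\mathbb{R}^2$ has signature $0$ no matter what $c$ is: it is the zero form when $c=0$, and otherwise $q(e_1+e_2)$ and $q(e_1-e_2)$ have opposite signs. Hence $Meyer(\gamma^{a},\gamma^{b})=sign\big(q_{\gamma^{a},\gamma^{b}}\big)=0$. The only point needing any care is the bookkeeping in computing $\alpha$ and $\beta$ (and checking the diagonal terms vanish), but this is entirely routine; note that positivity of $a$ and $b$ is not even used, only that they are nonzero.
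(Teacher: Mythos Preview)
Your argument is correct. Note, however, that the paper does not actually supply a proof of this Fact: it is stated without justification, presumably as a known property of the Meyer cocycle (see [G-G]). So there is nothing to compare against.

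Your diagonalisation approach is clean and complete. The only point worth spelling out a bit more is the conjugation--invariance you invoke: if $g\in SL(2,\mathbb{R})$ and one replaces $(\gamma_1,\gamma_2)$ by $(g\gamma_1 g^{-1},g\gamma_2 g^{-1})$, then $E$ is carried to $gE$, the vectors $v_1,v_2$ to $gv_1,gv_2$, and $\Omega(ge,g(v_1+v_2))=\Omega(e,v_1+v_2)$ since $g$ is symplectic; hence the signature is unchanged. A hyperbolic $\gamma$ is diagonalisable over $\mathbb{R}$ by some $g\in GL(2,\mathbb{R})$, and one may adjust $g$ (e.g.\ by right--multiplying by $\mathrm{diag}(1,-1)$) to land in $SL(2,\mathbb{R})$, so your reduction to $\gamma=\mathrm{diag}(\lambda,\lambda^{-1})$ is legitimate. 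After that, the computation $q(e)=(\beta-\alpha)xy$ and the observation that any form $c\,xy$ has signature~$0$ finish the job exactly as you say. Your remark that only $a,b\neq 0$ is needed (not positivity) is also correct.
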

REMARK. Since a generic element of $SL(2,\mathbb{Z})$ is hyperbolic, it follows from Fact \ref{meyer pour hyperboliques} that for almost every braid $\beta\in B(3)$, $sign(\beta^n)=nsign(\beta)$.

\subsection{Proof of Proposition \ref{thm sur les random walks}}

	\begin{lem}\label{qui est hyperbolique}
		Let $M=\begin{pmatrix}
		a & b \\
		c & d 
		\end{pmatrix}\in SL(2,\mathbb{Z})$. The matrix $Ms_1M²^{-1}s_1$ (resp. $Ms_2M²^{-1}s_2$, $Ms_1M²^{-1}s_2$, $Ms_2M²^{-1}s_1$)  is hyperbolic unless 
		\begin{equation}\label{condition pour l'hyperbolicite}
		b^2\leq 4\ \ (\mbox{resp}.\ c^2\leq 4,\  d^2\leq 4,\  a^2\leq 4)
		\end{equation} 
	\end{lem}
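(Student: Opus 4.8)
The plan is to compute the trace of the relevant product matrix explicitly and read off when its absolute value exceeds $2$, since a matrix in $SL(2,\mathbb{Z})$ is hyperbolic precisely when $|\mathrm{tr}|>2$. First I would note that conjugation does not change the trace, so $\mathrm{tr}(Ms_iM^{-1}s_j)=\mathrm{tr}(M^{-1}s_jMs_i)$; more usefully, I will just carry out the matrix multiplication $Ms_iM^{-1}s_j$ directly, using the explicit forms $s_1=\begin{pmatrix}1&0\\-1&1\end{pmatrix}$ and $s_2=\begin{pmatrix}1&1\\0&1\end{pmatrix}$ from (\ref{les generateurs}) and $M^{-1}=\begin{pmatrix}d&-b\\-c&a\end{pmatrix}$ (valid since $\det M=1$). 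Each of the four cases is a short, mechanical $2\times 2$ computation.

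The key observation that makes this clean is that $s_i$ is unipotent: $s_1=I-e_1\otimes\text{(something)}$, or more precisely $Ms_1M^{-1}$ is unipotent (it is conjugate to $s_1$), so $Ms_1M^{-1}=I+N$ for a nilpotent rank-one $N$, and likewise $s_j=I+N'$. Then $\mathrm{tr}(Ms_iM^{-1}s_j)=\mathrm{tr}(I+N+N'+NN')=2+\mathrm{tr}(NN')$, because nilpotent matrices have trace zero. So the whole problem reduces to computing the single scalar $\mathrm{tr}(NN')$ in each of the four cases. A direct computation shows that for $Ms_1M^{-1}$ one has $N=\begin{pmatrix}-ab&-b^2\\a^2&ab\end{pmatrix}$ (the image of $s_1-I$ conjugated, which lands in $\mathbb{R}M(e_2)$ with $M(e_2)=(b,d)^t$ — I'd verify the exact entries when writing it out), and pairing against the corresponding $N'$ for $s_1$ gives $\mathrm{tr}(NN')=b^2$; hence $\mathrm{tr}(Ms_1M^{-1}s_1)=2+b^2$, which exceeds $2$ in absolute value unless $b^2\le 4$ — wait, one must be careful about signs, so more accurately the trace will be $2\pm b^2$ or $2+(\text{quantity of fixed sign})$, and the hyperbolicity fails exactly on the stated finite list $b^2\le 4$. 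The analogous computations yield $c^2$, $d^2$, $a^2$ respectively for the other three products, matching (\ref{condition pour l'hyperbolicite}).

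The main obstacle I anticipate is purely bookkeeping: getting the signs right so that the trace comes out as $2$ plus a manifestly one-signed expression in $b^2$ (resp.\ $c^2,d^2,a^2$), rather than $2$ plus something like $b^2-2ab\cdot(\ldots)$ that could be negative. The reason it works is that $\mathrm{tr}(NN')$ is, up to sign, the square of the symplectic pairing $\Omega(M(e_k),e_k)$ between the rank-one images, and that pairing equals $\pm b$ (resp.\ $\pm c,\pm d,\pm a$); since it is squared, the cross terms cancel and only the perfect square survives. Once the four traces are in hand, the conclusion is immediate: $|2+b^2|>2 \iff b^2\neq 0$ and in fact $|2+b^2|\le 2$ forces $b^2\le 4$ only when we also allow the degenerate small cases, so the matrix is hyperbolic whenever $b^2 > 4$ (i.e.\ $|b|\ge 3$), which is exactly the contrapositive of the stated condition. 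I would present the $Ms_1M^{-1}s_1$ case in full and indicate that the other three are identical after the obvious substitution.
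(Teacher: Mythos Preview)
Your approach is exactly the paper's: compute the traces. The paper's entire proof is one line, ``Compute the traces, for example $\mathrm{Trace}(Ms_2M^{-1}s_1)=2-a^2$'', and your nilpotent decomposition $\mathrm{tr}\big((I+N)(I+N')\big)=2+\mathrm{tr}(NN')$ is a perfectly good way to organize that computation.

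The one genuine slip is the sign. With $s_1-I=\begin{pmatrix}0&0\\-1&0\end{pmatrix}$ one finds
\[
N=M(s_1-I)M^{-1}=\begin{pmatrix}-bd&b^2\\-d^2&bd\end{pmatrix},\qquad
NN'=\begin{pmatrix}-b^2&0\\-bd&0\end{pmatrix},
\]
so $\mathrm{tr}(NN')=-b^2$ and $\mathrm{tr}(Ms_1M^{-1}s_1)=2-b^2$, not $2+b^2$. (Your guessed $N$ had the wrong entries as you suspected.) This sign is what makes the conclusion clean rather than muddled: $|2-b^2|\le 2$ is equivalent to $0\le b^2\le 4$, i.e.\ exactly the stated exception $b^2\le 4$. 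The other three cases give $2-c^2$, $2-d^2$, $2-a^2$ in the same way. So once you fix the sign, your final paragraph of hedging disappears and the lemma follows immediately.
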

\begin{proof} Compute the traces, for example 
$Trace(Ms_2M²^{-1}s_1)=2-a^2$. \end{proof}
We can now conclude the proof of Proposition \ref{thm sur les random walks}. First we derive from (\ref{quasipositif.link}) that, for any $\beta\in B(3)$, $\chi_4(\sigma_i\beta\sigma_{j}\beta^{-1})=0$, hence $sign(\sigma_i\beta\sigma_{j}\beta^{-1})=0$. \\ Now let $\beta$ be a $3$-braid such that for all the matrix entries $ab$, we have
\begin{equation}\label{B est grand}
|\big({\mathcal B}_{-1}(\beta)\big)_{ab}|>2
\end{equation}
Then, for all $i,j\in\{1,2\}$ $\sigma_i\beta\sigma_{j}\beta^{-1}$ is hyperbolic (Lemma \ref{qui est hyperbolique}). Thus
 $$Meyer({\mathcal B}_{-1}(\sigma_i\beta\sigma_{j}\beta^{-1}),{\mathcal B}_{-1}((\sigma_i\beta\sigma_{j}\beta^{-1})^n)=0$$ (Fact \ref{meyer pour hyperboliques} of \S \ref{section sur le meyer cocycle}). It follows by induction that, for every $n$, $$sign((\sigma_i\beta\sigma_{j}\beta^{-1})^n)=0.$$
 On the other hand, Theorem \ref{prop sur les transients} tells us that the braids not verifying (\ref{B est grand}) are a transient set for the right random $\mu$-walks on $B(3)$. 
	 \qed

\section{The Lissajous toric knots}\label{rappels sur les lissajous toriques}
\subsection{Description} 
\subsubsection{The knots $K(N,q,p)$}\label{paragraphe knot}
In [S-V], the authors exhibited Lissajous toric knots as one of the classes of  boundaries of minimal disks in the $4$-ball with a branch point at the origin. Before that, Lamm investigated these knots in connection with billiards in the solid torus ([L-O]). If $N,q,p$ are integers, with $(N,q)=(N,p)=1$, the $K(N,q,p)$ Lissajous toric knot is defined in a $3D$-cylinder as
$$F_{N,q,p}:[0,2\pi]\longrightarrow\mathbb{S}^1\times\mathbb{R}^2$$
\begin{equation}\label{knot in the cylinder}
F_{N,q,p}:\theta\mapsto(e^{Ni\theta},\sin(q\theta),\cos(p\theta+\alpha))
\end{equation}
for a phase $\alpha$.
Endow $\mathbb{R}^3$ with a coordinate  system $(x,y,z)$ and take the Lissajous curve $C_{p,q}:\theta\mapsto \big(0,2+\sin(q\theta),\cos(p\theta+\alpha)\big)$; then the knot $K(N,q,p)$ is described in $\mathbb{R}^3$ by a point travelling along $C_{p,q}$ while $C_{p,q}$ rotates $N$ times along the vertical axis $Oz$.  We recall a few facts
\begin{fact}\label{no phase}([S-V])
	For a finite number of phases $\alpha$'s the expression in (\ref{knot in the cylinder}) gives us singular crossing points. Otherwise (\ref{knot in the cylinder}) defines a knot and up to mirror transformation, its knot type does not depend on the phase $\alpha$.
\end{fact}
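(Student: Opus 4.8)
The plan is to separate the two assertions of the statement: first, that the parametrization (\ref{knot in the cylinder}) fails to define an embedded curve only for finitely many phases $\alpha$; and second, that for the remaining phases the knot type is independent of $\alpha$ up to mirror image. Write $F^{\alpha}$ for the map (\ref{knot in the cylinder}) with phase $\alpha$. I would begin by noting that $F^{\alpha}$ is always an immersion: the derivative of its first coordinate is $Ni\,e^{Ni\theta}$, which never vanishes, so the only way (\ref{knot in the cylinder}) can fail to be a knot is through a self-intersection, i.e. a pair $\theta_1\neq\theta_2$ in $[0,2\pi)$ with $F^{\alpha}(\theta_1)=F^{\alpha}(\theta_2)$.

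Equating the three coordinates gives $e^{Ni\theta_1}=e^{Ni\theta_2}$, $\sin(q\theta_1)=\sin(q\theta_2)$, and $\cos(p\theta_1+\alpha)=\cos(p\theta_2+\alpha)$. The first equation forces $\theta_1-\theta_2\in\frac{2\pi}{N}\mathbb{Z}$, and combined with the second, using $(N,q)=1$, it cuts out a finite set $\mathcal C$ of candidate crossing pairs that does not involve $\alpha$. For a fixed pair in $\mathcal C$ the third equation splits into two branches: the "difference" branch $p(\theta_1-\theta_2)\in 2\pi\mathbb{Z}$ is incompatible with $0<|\theta_1-\theta_2|<2\pi$ because $(N,p)=1$, while the "sum" branch $2\alpha\equiv -p(\theta_1+\theta_2)\ (\mathrm{mod}\ 2\pi)$ is solved by only finitely many $\alpha$. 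Hence the set $B$ of phases for which some pair of $\mathcal C$ yields a genuine self-intersection is finite, and for $\alpha\notin B$ the map (\ref{knot in the cylinder}) is an embedding. This establishes the first assertion together with the embeddedness half of the second.

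For the phase-independence, I would project out the $z$-coordinate. The shadow $\theta\mapsto(e^{Ni\theta},\sin q\theta)$ in $\mathbb{S}^1\times\mathbb{R}$ is independent of $\alpha$ and carries the finite crossing set determined by $\mathcal C$; the phase only records, at each crossing, which strand has the larger height $\cos(p\theta_i+\alpha)$. On each arc of $(\mathbb{R}/2\pi\mathbb{Z})\setminus B$ the embeddings vary continuously without collision, hence are ambient isotopic, and the knot type is constant along the arc. The reflection $R_z(w,y,z)=(w,y,-z)$ satisfies $R_z\circ F^{\alpha}=F^{\alpha+\pi}$ and is orientation-reversing on $\mathbb{S}^1\times\mathbb{R}^2$, so the knot built from $\alpha+\pi$ is the mirror of the one built from $\alpha$; similarly the orientation-preserving map $G(w,y,z)=(\bar w,-y,z)$ intertwines the phases $\alpha$ and $-\alpha$. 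These symmetries reduce the whole claim to understanding how the knot type changes as $\alpha$ crosses a single phase of $B$.

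The main obstacle is exactly this last point. As $\alpha$ sweeps through a bad phase the two strands meeting at the corresponding crossing exchange heights, which is a crossing change and, a priori, alters the knot. The plan is to show these crossing changes are harmless up to mirror: I would pair the phases of $B$ using $R_z$ and $G$ and argue that within one half-period the crossing changes occur in symmetric batches realizing Reidemeister-II cancellations, or as nugatory changes on strands bounding an embedded disk in the complementary solid torus, so that the knot type is preserved on passing from one arc to the next, with the global $R_z$ symmetry accounting for the mirror. The delicate and essential step is verifying that every crossing change is of this benign type rather than a genuinely knot-altering one; it is here that the special toric structure of the shadow — in particular its behaviour under the reparametrization $\theta\mapsto\theta+\frac{2\pi}{N}$, which permutes the strands over a fixed point of the base circle — must be brought to bear.
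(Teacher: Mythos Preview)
The paper does not supply a proof of this Fact; it is quoted from [S-V] with no argument. So there is nothing in the present paper to compare your proposal against, and it must be assessed on its own.

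Your handling of the first assertion is correct and complete: the first two coordinates of $F^{\alpha}$ are $\alpha$-free and already cut the possible double points down to a finite set $\mathcal C$ of parameter pairs, and for each such pair the third equation picks out only finitely many phases via its ``sum'' branch (the ``difference'' branch being excluded by $(N,p)=1$). The symmetry identities $R_z\circ F^{\alpha}=F^{\alpha+\pi}$ and $G\circ F^{\alpha}(\theta)=F^{-\alpha}(-\theta)$ are also checked correctly.

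The second assertion, however, is not proved, and you say so yourself. The symmetries reduce the parameter space to a fundamental domain such as $[0,\pi/2]$, but they do \emph{not} by themselves rule out bad phases inside that domain, and each bad phase still effects a genuine crossing change on the diagram. Your hope that these changes occur ``in symmetric batches realizing Reidemeister-II cancellations'' or are nugatory is not substantiated: nothing in the outline identifies which crossings share a bad phase, nor exhibits the disk that would make a change nugatory. The sentence ``it is here that the special toric structure \dots\ must be brought to bear'' is exactly where the proof is missing.

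The route taken in [S-V], as one can infer from the explicit braid description recorded later in this paper (Fact~\ref{description de la tresse} and formula~(\ref{congrus})), is different: one computes the $N$-braid word of $K(N,q,p)$ directly and observes that its letters are governed by the floor function $k\mapsto\big[\frac{2Apk}{q}\big]$, which does not involve $\alpha$; the phase enters only through a global sign that accounts for the mirror ambiguity. That computation bypasses the crossing-change analysis entirely. If you want to salvage your approach, you would need to classify the pairs in $\mathcal C$ sharing a common bad phase and produce, for each, an explicit isotopy across the singular moment; otherwise the argument stops at the point you flagged.
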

Thus we drop the phase $\alpha$ in (\ref{knot in the cylinder}) and we just talk of a knot $K(N,q,p)$ defined up to mirror symmetry.
\subsubsection{The braids $B(N,q,p)$}\label{paragraphe braid}
 The knot $K(N,q,p)$  has a natural $N$-braid representation $B(N,q,p)$.
\begin{fact}\label{description de la tresse}([S-V]) Let $\tilde{p},\tilde{q},d$ be three positive integers all coprime with $N$ and such that $(\tilde{p},\tilde{q})=1$. Assume (without loss of generality) that $\tilde{q}$ is odd. Then
	$$B(N,d\tilde{q},d\tilde{p})=B(N,\tilde{q},\tilde{p})^d$$
	and there exists a braid $Q_{N,\tilde{q},\tilde{p}}$ such that 
	\begin{equation}\label{braid}
	B(N,\tilde{q},\tilde{p})=Q_{N,\tilde{q},\tilde{p}}\sigma_2^{\epsilon(2)}Q_{N,\tilde{q},\tilde{p}}^{-1}\sigma_1^{\epsilon(1)}
	\end{equation}
	with $\epsilon(1), \epsilon(2)\in\{-1,1\}$.\\
	If $2N$ divides $\tilde{p}+\tilde{q}$ or $\tilde{p}-\tilde{q}$, $\epsilon(1)= \epsilon(2)=1$ so the braid is quasipositive: for $N=3$, this happens if and only if $\tilde{q}$ and $\tilde{p}$ are both odd.
\end{fact}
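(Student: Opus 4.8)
The plan is to read the braid $B(N,\tilde q,\tilde p)$ off the parametrisation (\ref{knot in the cylinder}) and to follow its crossings. Using Fact \ref{no phase} I first fix a generic phase $\alpha$, so that no two crossings occur at the same angle and the braid word is unambiguous. Fixing the angular coordinate $\phi\in[0,2\pi)$ on the $\mathbb S^1$-factor of the solid torus, the fibre over $e^{i\phi}$ meets the knot in the $N$ points with parameters $\theta_k(\phi)=(\phi+2\pi k)/N$, $k=0,\dots,N-1$ (there are exactly $N$ of them precisely because $\gcd(N,\tilde q)=\gcd(N,\tilde p)=1$), sitting at the planar positions $P_k(\phi)=\big(\sin(\tilde q\theta_k),\cos(\tilde p\theta_k+\alpha)\big)$. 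The braid $B(N,\tilde q,\tilde p)$ is the loop, based at the initial configuration, traced by the unordered set $\{P_0(\phi),\dots,P_{N-1}(\phi)\}$ as $\phi$ runs from $0$ to $2\pi$; a word in the $\sigma_i^{\pm1}$ is obtained by projecting $\mathbb R^2$ onto the first coordinate $x=\sin(\tilde q\theta)$ and recording, at each $\phi$ where two strands share an $x$-coordinate, which of them has the larger second coordinate $z=\cos(\tilde p\theta+\alpha)$.

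For the power formula $B(N,d\tilde q,d\tilde p)=B(N,\tilde q,\tilde p)^d$, I would substitute $\psi=d\phi$. Since $\sin(\tilde q\cdot d\theta_k(\phi))=\sin\big(\tilde q\,\theta_{dk\bmod N}(\psi)\big)$ and likewise for the $z$-coordinate, the configuration of the $(d\tilde q,d\tilde p)$-curve at angle $\phi$ coincides, as an unordered set, with that of the $(\tilde q,\tilde p)$-curve at angle $\psi$; here $\gcd(N,d)=1$ is exactly what makes $k\mapsto dk\bmod N$ a bijection of $\{0,\dots,N-1\}$. Because the $(\tilde q,\tilde p)$-configuration is $2\pi$-periodic in its angular variable ($\theta_k(\psi+2\pi)=\theta_{k+1}(\psi)$ only cyclically relabels the points), the loop traced for $\psi\in[0,2\pi d]$ is the $d$-fold iterate of the loop for $\psi\in[0,2\pi]$, which gives the stated identity of braids once one checks that the two base configurations at $\phi=0$ agree.

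The substantive step is the two-band normal form (\ref{braid}). Two strands $k\ne k'$ cross exactly when $\sin(\tilde q\theta_k)=\sin(\tilde q\theta_{k'})$; the alternative $\tilde q\theta_k\equiv\tilde q\theta_{k'}\pmod{2\pi}$ would force $N\mid\tilde q(k-k')$, hence $N\mid(k-k')$ by $\gcd(N,\tilde q)=1$, which is impossible, so every crossing occurs at an angle with $\tilde q(\theta_k+\theta_{k'})\equiv\pi\pmod{2\pi}$, i.e. $2\tilde q\phi/N\equiv\pi-2\pi\tilde q(k+k')/N\pmod{2\pi}$. I would solve this family of congruences, tabulate the crossing angles together with their strand pairs, and use the over/under rule above (the sign of $\cos(\tilde p\theta_k+\alpha)-\cos(\tilde p\theta_{k'}+\alpha)$ there) to assemble the braid word; the claim to be proved is that when $\tilde q$ is odd all but two of the a priori $\Theta(\tilde q)$ crossings cancel against one another, leaving a word of the form $Q\,\sigma_2^{\epsilon(2)}\,Q^{-1}\,\sigma_1^{\epsilon(1)}$ (so $Q$ is literally the braid accumulated before the surviving ``$\sigma_2$-band''), and that $2N\mid\tilde p+\tilde q$ or $2N\mid\tilde p-\tilde q$ forces the surviving over/under data to give $\epsilon(1)=\epsilon(2)=1$; specialising $N=3$ and matching congruences mod $6$ then yields the final parity statement. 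This crossing bookkeeping --- proving the collapse of the $\Theta(\tilde q)$ crossings down to exactly two bands and pinning the two signs --- is where the real work lies, and it is precisely what is carried out in [S-V] (compare Lamm's billiard description [L-O]); the symmetry inputs I would lean on to organise the cancellation are the invariance of the Lissajous curve under $\theta\mapsto-\theta$ and the $\mathbb Z/N$ rotational symmetry of the spinning construction.
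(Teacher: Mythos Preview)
The paper gives no proof of this Fact: it is quoted from [S-V] and used as a black box (the explicit form of $Q_{3,\tilde q,\tilde p}$ written out later in \S\ref{p et q sont impairs}, via the sign function $\lambda$, is likewise imported from that reference). Your write-up is not really a self-contained proof either --- you set up the crossing analysis from the parametrisation correctly and give a clean argument for the power law $B(N,d\tilde q,d\tilde p)=B(N,\tilde q,\tilde p)^d$ via the substitution $\psi=d\phi$ and the bijection $k\mapsto dk\bmod N$, but for the two-band normal form (\ref{braid}) and the determination of $\epsilon(1),\epsilon(2)$ you explicitly defer the ``real work'' to [S-V], which is exactly what the present paper does. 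So there is nothing substantive to compare: both the paper and your proposal treat this statement as an external input, and your outline of how that input is established in [S-V] (sine-coincidence condition on $\theta_k+\theta_{k'}$, cancellation organised by the symmetries $\theta\mapsto-\theta$ and the $\mathbb Z/N$ rotation) is an accurate summary of the strategy.
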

The braid (\ref{braid}) is a symmetric union as defined by [La] thus: 
\begin{fact}\label{fact ribbon}
	If $N,q,p$ are all mutually prime, $K(N,q,p)$ is a ribbon knot.
\end{fact}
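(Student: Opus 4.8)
The plan is to deduce ribbon-ness from Lamm's theorem ([La]) that every symmetric union represents a ribbon knot, once the closure of the braid (\ref{braid}) is recognized as a symmetric union. First I would reduce to the situation of Fact \ref{description de la tresse}. The hypothesis that $N,q,p$ are mutually prime gives in particular $(q,p)=1$, so in the notation of Fact \ref{description de la tresse} the integer $d=\gcd(q,p)$ equals $1$; hence $B(N,q,p)$ is not a proper power but is exactly the braid (\ref{braid}), namely $B(N,\tilde q,\tilde p)=Q_{N,\tilde q,\tilde p}\,\sigma_2^{\epsilon(2)}Q_{N,\tilde q,\tilde p}^{-1}\sigma_1^{\epsilon(1)}$ (after relabelling so that $\tilde q$ is odd, which by Fact \ref{no phase} at worst replaces $K(N,q,p)$ by its mirror image and so does not affect whether the knot is ribbon). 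Thus it suffices to exhibit the closure of (\ref{braid}) as a symmetric union in the sense of [La].

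Next I would make the symmetric union structure explicit. The word $Q\,\sigma_2^{\epsilon(2)}Q^{-1}\sigma_1^{\epsilon(1)}$ is built from a block $Q$, its inverse $Q^{-1}$, and the two ``axial'' letters $\sigma_2^{\epsilon(2)}$ and $\sigma_1^{\epsilon(1)}$. I would produce a planar diagram from the closed braid carrying a reflection whose axis separates the $Q$ block from the $Q^{-1}$ block: because $Q^{-1}$ is the inverse of $Q$, the off-axis crossings contributed by $Q$ and by $Q^{-1}$ come in mirror-image pairs, so that the reflection carries the diagram to itself away from the axis and identifies the two partial knots (each the mirror of the other), the partial knot being the one determined by $Q$. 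The only crossings meeting the axis are those of $\sigma_2^{\epsilon(2)}$ and $\sigma_1^{\epsilon(1)}$, which I would arrange as the on-axis connecting twist tangle permitted in Lamm's construction, with $\epsilon(1),\epsilon(2)$ recording the signs of the axial twists. (The underlying reason for the symmetry is the involution $\theta\mapsto-\theta$ of the parametrization (\ref{knot in the cylinder}) with $\alpha=0$, which negates $\sin(q\theta)$ and conjugates $e^{Ni\theta}$ while fixing $\cos(p\theta)$; it is this geometric reflection that the braid word (\ref{braid}) records.)

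Once the diagram is presented as a symmetric union, Lamm's theorem [La] immediately yields that $K(N,q,p)$ bounds a ribbon disk in $\mathbb{B}^4$, i.e. is a ribbon knot, completing the proof. I expect the main obstacle to be the combinatorial verification that the proposed reflection really is a symmetry of the closed-braid diagram meeting Lamm's definition: one must choose the axis so that every crossing contributed by $Q$ is matched with its mirror from $Q^{-1}$ and so that only the two letters $\sigma_1^{\epsilon(1)},\sigma_2^{\epsilon(2)}$ remain on the axis, and one must track orientations and the signs $\epsilon(1),\epsilon(2)\in\{-1,1\}$ so that the on-axis tangle is an admissible twist region rather than a configuration that breaks the symmetry. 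Everything after this identification is a direct appeal to [La].
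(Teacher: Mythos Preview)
Your approach is exactly the paper's: the authors justify this fact in a single sentence, observing that the braid (\ref{braid}) is a symmetric union in the sense of [La] (hence ribbon), and pointing to [L-O] and [S-V] for details. Your proposal is a fleshed-out version of that same argument, so there is nothing to correct or contrast.
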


\subsection{The signature of the Lissajous toric knots}
For $N=3$, a Lissajous toric knot has zero signature unless it is a torus knot. More precisely,
\begin{thm}\label{theoreme sur lissajous}
	Let $\tilde{q},\tilde{p},n$ be two positive integers, none of them divisible by $3$ and $\tilde{q},\tilde{p}$ mutually prime.\\
	For every positive integer $n$,
	\begin{enumerate}
		\item If $\tilde{q}$ and $\tilde{p}$ are both odd, up to mirror image, $K(3,n\tilde{q},n\tilde{p})$ is a quasipositive knot with
		\begin{equation}\label{smooth genus} 
		g_4(K(3,n\tilde{q},n\tilde{p}))=d-1
		\end{equation}
		and verifying one of the following
		\begin{enumerate}
			\item 
			the signature of $K(3,n\tilde{q},n\tilde{p})$ is zero
			\item
			$B(3,\tilde{q},\tilde{p})=\sigma_2\sigma_1$ so $K(3,\tilde{q},\tilde{p})$ is a trivial knot and $K(3,n\tilde{q},n\tilde{p})$ is a $(3,n)$-torus knot.
		\end{enumerate}
		\item 
		If $\tilde{q}$ and $\tilde{p}$ have different parities, $K(3,n\tilde{q},n\tilde{p})$ is isotopic to its mirror image, so it has zero signature.  
	\end{enumerate}
	
\end{thm}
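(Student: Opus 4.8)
The plan is to treat the two cases of the theorem separately. Throughout write $Q=Q_{3,\tilde q,\tilde p}$, $B=B(3,\tilde q,\tilde p)$ and $M=\mathcal B_{-1}(Q)\in SL(2,\mathbb Z)$; by Fact~\ref{description de la tresse}, $B(3,n\tilde q,n\tilde p)=B^{n}$. Consider first the case where $\tilde q$ and $\tilde p$ are both odd. Then Fact~\ref{description de la tresse} gives $B=Q\sigma_{2}Q^{-1}\sigma_{1}$, a quasipositive $3$-braid with two factors, so $B^{n}$ is quasipositive with $2n$ factors; since $K(3,n\tilde q,n\tilde p)$ is a knot, Rudolph's formula~(\ref{quasipositif.link}) gives $\chi_{4}(K(3,n\tilde q,n\tilde p))=3-2n$, hence $g_{4}=n-1$, which is~(\ref{smooth genus}). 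For the signature I first pass to a conjugate braid with the same closure: conjugating $B^{n}$ by $Q^{-1}$ yields $\widehat{B^{n}}=\widehat{\delta^{n}}$ with $\delta:=\sigma_{2}Q^{-1}\sigma_{1}Q$ and $\mathcal B_{-1}(\delta)=s_{2}M^{-1}s_{1}M=:D$. Iterating the formula~(\ref{formule de gg}) telescopically,
\[
sign\big(K(3,n\tilde q,n\tilde p)\big)=n\cdot sign(\hat\delta)-\sum_{k=1}^{n-1}Meyer\big(D^{k},D\big).
\]
Now $\hat\delta=\widehat{B}=K(3,\tilde q,\tilde p)$, which is ribbon by Fact~\ref{fact ribbon} (here $3,\tilde q,\tilde p$ are pairwise coprime), hence slice, hence $sign(\hat\delta)=0$; and $D$ has the same trace as $Ms_{2}M^{-1}s_{1}$, so by Lemma~\ref{qui est hyperbolique} it is hyperbolic whenever $M_{11}^{2}>4$, in which case every $D^{k}$ is hyperbolic and all the terms $Meyer(D^{k},D)$ vanish by Fact~\ref{meyer pour hyperboliques}. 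Hence $sign(K(3,n\tilde q,n\tilde p))=0$ as soon as $|M_{11}|>2$ — this is alternative (a).

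The remaining possibility $|M_{11}|\le 2$ is where the real work lies, and is the step I expect to be the main obstacle. Here one must return to the explicit braid word $Q_{3,\tilde q,\tilde p}$ of [S-V] and show that, among Lissajous braids with $\tilde q,\tilde p$ both odd, $|(\mathcal B_{-1}(Q))_{11}|\le 2$ forces $B=\sigma_{2}\sigma_{1}$ up to conjugacy, so that $\widehat{B^{n}}=\widehat{(\sigma_{2}\sigma_{1})^{n}}$ is the $(3,n)$-torus knot — alternative (b). One cannot expect the Meyer terms to vanish in this range: for $B=\sigma_{2}\sigma_{1}$ the matrix $D$ is elliptic of order $6$ and $\widehat{\delta^{n}}$ carries the nonzero signature of a torus knot, in agreement with the theorem. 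This reduces to a finite analysis of the small reduced Burau matrices realized by these specific $Q$'s, governed by their continued-fraction-type structure; everything before it is formal manipulation of the already-established Lemma~\ref{qui est hyperbolique}, Fact~\ref{meyer pour hyperboliques} and Fact~\ref{fact ribbon}.

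For the second case, where $\tilde q$ and $\tilde p$ have different parities, the plan is to show directly that $K(3,n\tilde q,n\tilde p)$ is isotopic to its mirror image; this forces $sign=-sign=0$, since a reflection reverses the signature. The amphichirality should be read off from the defining curve~(\ref{knot in the cylinder}): because $N=3$ is odd and exactly one of $n\tilde q,n\tilde p$ is even, a suitable combination of the rotation $\theta\mapsto\theta+\pi$ with a coordinate reflection produces an orientation-reversing ambient symmetry of $\mathbb R^{3}$ carrying the knot onto itself. Alternatively one can work at the braid level, where $B=Q\sigma_{2}^{\epsilon(2)}Q^{-1}\sigma_{1}^{\epsilon(1)}$ with $\epsilon(1)\ne\epsilon(2)$, and present the closure of $B^{n}$ as a knot equal to its own mirror (for instance a connected sum $J\#\bar J$). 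I expect this case to be the more routine one, modulo quoting the precise values of $\epsilon(1),\epsilon(2)$ from [S-V].
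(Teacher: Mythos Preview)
Your overall architecture is correct and coincides with the paper's: in Case~1 one combines (i) $sign(\widehat{B})=0$ because $K(3,\tilde q,\tilde p)$ is ribbon, with (ii) hyperbolicity of $\mathcal B_{-1}(B)$ so that all the Meyer terms in the telescoping Gambaudo--Ghys formula vanish. Your reduction of hyperbolicity to $|M_{11}|>2$ via Lemma~\ref{qui est hyperbolique} is exactly the paper's trace computation (note your conjugation by $Q^{-1}$ is harmless but unnecessary: $\mathcal B_{-1}(B)$ and $D$ are conjugate and the trace already lives on conjugacy classes). For Case~2 the paper does precisely your first plan: it exhibits the orientation-reversing ambient isometry coming from the parameter shift --- the correct shift is $\theta\mapsto\theta+\pi/n$, not $\theta\mapsto\theta+\pi$, so that the $e^{iN\theta}$ factor rotates while exactly one of the sine/cosine factors flips sign. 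Your braid-level alternative ($\epsilon(1)\neq\epsilon(2)$) is not what the paper does and would need independent justification.

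The one place your proposal is genuinely incomplete is, as you anticipated, the residual case $|M_{11}|\le 2$. You suggest a ``continued-fraction-type'' analysis of which matrices $M=\mathcal B_{-1}(Q_{3,\tilde q,\tilde p})$ can occur, but there are infinitely many $M\in SL(2,\mathbb Z)$ with $|M_{11}|\le 2$, and without further input this is not a finite problem. The paper's key observation, which you are missing, is a hidden symmetry of the Lissajous word: the exponent sequence of $Q_{3,\tilde q,\tilde p}$ satisfies $\lambda(k)=-\lambda(\tilde q-k)$. Together with the identity $s_2={}^{t}s_1^{-1}$ this forces the factorization
\[
\mathcal Q \;=\; \mathcal P\,{}^{t}\mathcal P,\qquad \mathcal P:=\mathcal B_{-1}\big(\sigma_2^{\lambda(1)}\sigma_1^{\lambda(2)}\cdots\sigma_1^{\lambda((\tilde q-1)/2)}\big),
\]
so that $M=\mathcal Q$ is symmetric positive definite with $M_{11}=a^2+b^2$, where $a,b$ are the top-row entries of $\mathcal P$. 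Thus $|M_{11}|\le 2$ becomes $a^2+b^2\le 2$, i.e.\ $a,b\in\{-1,0,1\}$, and one is left with four one-parameter families of $\mathcal P$ (indexed by the free lower-left entry). For each, the paper reconstructs the braid $\beta$ modulo $\ker\mathcal B_{-1}=\langle\Delta^2\rangle$ and checks that either $\beta$ is conjugate to $\sigma_2\sigma_1$ (alternative~(b)) or closes into a three-component link, contradicting the hypothesis that $K(3,n\tilde q,n\tilde p)$ is a knot. That factorization $\mathcal Q=\mathcal P\,{}^{t}\mathcal P$ is the missing idea that turns your ``finite analysis'' into an actual finite analysis.
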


REMARK 1. The Lissajous toric knots $K(3,n,n)$ are just the $(3,n)$ torus knots so clearly they are in the case 1 (b) of Theorem \ref{theoreme sur lissajous}, but they are not the only ones. For example if $\tilde{p}=\tilde{q}+6$ the knot $K(3,\tilde{q},\tilde{q}+6)$ (e.g. $K(3,7,13)$) is represented by the braid  $$B(3,\tilde{q},\tilde{q}+6)=\sigma_2\sigma_1$$
thus it is trivial and, for a positive integer $n$, $$B(3,n\tilde{q},n(\tilde{q}+6))=(\sigma_2\sigma_1)^n$$ so $K(3,n\tilde{q},n(\tilde{q}+6))$ is a $(3,n)$-torus knot.\\
In \S \ref{Lissajousstat}, we see that at least for $\tilde{q}$'s up to $100$, the majority of knots of 1. in Theorem \ref{theoreme sur lissajous} 1. are in the case 1. (a) of that Theorem. \\

EXEMPLE. The knot $K(3,5,7)$ is, up to mirror image, the knot $10_{155}$ in the Rolfsen classification ([S-V]) and is represented by the braid \begin{equation} 
B(3,5,7)=\sigma_2\sigma_1^{-1}\sigma_2\sigma_1^{-1}\sigma_2\sigma_1\sigma_2^{-1}\sigma_1\sigma_2^{-1}\sigma_1                 \end{equation} 

\subsection{Proof of Proposition \ref{signature de lissajous}}
The proof depends on the parities of $\tilde{q}$ and $\tilde{p}$; since these numbers are mutually prime, either they are both odd or they have different parities.
\subsubsection{1st case: $\tilde{q}$ and $\tilde{p}$ are both odd}\label{p et q sont impairs}
Up to mirror symmetry, the braid $B(3,\tilde{q},\tilde{p})$ is of the form (Fact \ref{description de la tresse})
 $$B(3,\tilde{q},\tilde{p})=Q_{3,\tilde{q},\tilde{p}}\sigma_2 Q_{3,\tilde{q},\tilde{p}}^{-1}\sigma_1$$ with  ([S-V])\ \ \ \ \ \ $Q_{3,\tilde{q},\tilde{p}}=\sigma_2^{\lambda(1)}\sigma_1^{\lambda(2)}
 \sigma_2^{\lambda(3)}\sigma_1^{\lambda(4)}
 ...
 \sigma_2^{\lambda(\tilde{q}-2)}
 \sigma_1^{\lambda(\tilde{q}-1)}$\\
where $\lambda$ is an expression with values in $\{-1, 1\}$ verifying\\
$\boxed{\lambda(k)=-\lambda(\tilde{q}-k)}$\ \ \ 
so we rewrite $Q_{3,\tilde{q},\tilde{p}}$ and introduce the braid $P$:
\begin{equation}\label{QQ}
Q_{3,\tilde{q},\tilde{p}}=\underbrace{\sigma_2^{\lambda(1)}\sigma_1^{\lambda(2)}\cdot\cdot\cdot
\sigma_2^{\lambda(\frac{\tilde{q}-3}{2})}\sigma_1^{\lambda(\frac{\tilde{q}-1}{2})}}_{P}
\sigma_2^{-\lambda(\frac{\tilde{q}-1}{2})}
\sigma_1^{-\lambda(\frac{\tilde{q}-3}{2})}
...\sigma_2^{-\lambda(2)}\sigma_1^{-\lambda(1)}
\end{equation}
We take the images of these braids under ${ \mathcal B}_{-1}$ and let $${\mathcal Q}={\mathcal B}_{-1}(Q_{3,\tilde{q},\tilde{p}})\ \ \ \ \ \ \ \ \ \ \ \
{\mathcal P}={\mathcal B}_{-1}(P).$$
We notice that the $s_i$'s of (\ref{les generateurs}) verify
$\boxed{s_2={}^{t}s_1^{-1}}$
and derive
\begin{equation}
{\mathcal Q}={\mathcal P}^t{\mathcal P}
\end{equation}
	Let ${\mathcal P}=\begin{pmatrix}
	a & b \\
	c & d 
	\end{pmatrix}$. We compute\\ 
	$Trace({\mathcal B}_{-1}(B(3,\tilde{q},\tilde{p})))=Trace[{\mathcal P}^t{\mathcal P}s_2^t{\mathcal P}^{-1}{\mathcal P}^{-1}s_1]=2-(a^2+b^2)^2$. Thus
	\begin{lem}
	The matrix ${\mathcal B}_{-1}(B(3,\tilde{q},\tilde{p}))$ is hyperbolic except if $a,b\in\{-1,0,1\}$.
	\end{lem}
To go from the matrices back to braids, we recall
\begin{prop} Let $\Delta=\sigma_1\sigma_2\sigma_1\sigma_2\sigma_1\sigma_2$. It is a pure braid which belongs to the center of $B(3)$. If $\beta_1,\beta_2$ are two $3$-braids with ${\mathcal B}_{-1}(\beta_1)={\mathcal B}_{-1}(\beta_2)$. Then for some $k$, $\beta_2=\Delta^k\beta_1$.
	\end{prop}
If ${\mathcal P}$ is hyperbolic, it
has one of the following forms, for some integer $h$.
\begin{itemize}
	\item 
	${\mathcal P}=(\pm 1)\begin{pmatrix}
	1 & 0 \\
	h & 1 
	\end{pmatrix}=(\pm 1)s_1^{-h}$\\
	Thus ${\mathcal Q}={\mathcal P}^t{\mathcal P}=s_1^{-h}s_2^{h}$ and, for some integer $k$,
	$$\beta=(\sigma_1^{-h}\sigma_2^{h}\Delta^k)\sigma_2(\Delta^{-k}\sigma_2^{-h}\sigma_1^{h})\sigma_1
	=\sigma_1^{-h}(\sigma_2\sigma_1)\sigma_1^{h}
	$$
	so $\beta$ is conjugate to $\sigma_2\sigma_1$ and the knot is as in 1. (b) of Theorem \ref{theoreme sur lissajous}.
	\item 
	${\mathcal P}=(\pm 1)\begin{pmatrix}
	0 & 1 \\
	-1 & h 
	\end{pmatrix}=(\pm 1)s_1^{1-h}s_2s_1$\\
	Since $\sigma_1\sigma_2\sigma_1=\sigma_2\sigma_1\sigma_2$, we have ${\mathcal Q}=\Delta^ks_1^{-h}s_2^{h}$ and similarly to above,
	$$\beta=\sigma_1^{-h}(\sigma_2\sigma_1)\sigma_1^{h}.$$
	Again we are in Case 1. (b) of Theorem \ref{theoreme sur lissajous}.
	\item 
	${\mathcal P}=(\pm 1)\begin{pmatrix}
	1 & 1 \\
	h & h+1 
	\end{pmatrix}=(\pm 1)s_1^{-h}s_2$\\
	Then ${\mathcal Q}=\Delta^ks_1^{-h}s_2s_1^{-1}s_2^{h}$ and
	$\beta=\sigma_1^{-h}(\sigma_2\sigma_1^{-1}\sigma_2\sigma_1\sigma_2^{-1}\sigma_1)\sigma_1^{-h}$
	This braid $\beta$ closes into a link with $3$ components, thus it cannot be the braid of a knot.
	
	\item 
	${\mathcal P}=(\pm 1)\begin{pmatrix}
	1 & -1 \\
	h & 1-h
	\end{pmatrix}=s_1^{-h}s_2^{-1}$ and 
	$\beta=\sigma_1^{-h}(\sigma_2^{-1}\sigma_1\sigma_2)(\sigma_1^{-1}\sigma_2\sigma_1)\sigma_1^{h}$.
	Again $\beta$ closes in a link with $3$ components. \qed
\end{itemize}

\subsubsection{2nd case: $\tilde{q}$ and $\tilde{p}$ have different parities}
We assume that $\tilde{q}$ is odd and $\tilde{p}$ is even.
The knot $K(N,d\tilde{q},d\tilde{p})$ is defined by  
	 the function $F_{N,d\tilde{q},d\tilde{p}}$ (see (\ref{knot in the cylinder}) above), so we can also define it by the function
	 $$\theta\mapsto F_{N,d\tilde{q},d\tilde{p}}(\theta+\frac{\pi}{d})=
	 (e^{iN\theta}e^{iN\frac{\pi}{d}},-\sin(d\tilde{q}\theta),\cos(d\tilde{p}\theta+\alpha))$$
	 thus $K(N,d\tilde{q},d\tilde{p})$ is invariant under the transformation
	 \begin{equation}\label{grosse matrice}
	 \begin{pmatrix}
	 \cos(\frac{N\pi}{d}) & -\sin(\frac{N\pi}{d}) & 0 & 0\\
	 \sin(\frac{N\pi}{d}) & \cos(\frac{N\pi}{d}) & 0 & 0\\
	 0 & 0 & -1 & 0\\ 
	 0 & 0 & 0 & 1
	 \end{pmatrix}
	 \end{equation}
	 which reverses the orientation on $\mathbb{R}^4$ and on $\mathbb{S}^3$. Thus thus the knot is isotopic to its mirror image and  has zero signature. \qed

	 \section{Statistics}
	 The following estimates were done with SageMath.
	 \subsection{Random quasipositive braids}
	  We define $Z=\{\beta\in B(3): |\big({\mathcal B}_{-1}(\beta)\big)_{11}|> 2\}.$	\\  It follows from the previous discussion that, if a $3$-braid $\beta$ belongs to $Z$, then for all positive integers $n$, we have  $$sign\Big(\widehat{(\beta\sigma_1\beta^{-1}\sigma_2)^n}\Big)=0.$$
	 Let $W$ be the $\mu$-random walk on $B(3)$  given by the probability $\mu$ on $B(3)$ where
	 $$\mu(\sigma_1)= \mu(\sigma_2)=\mu(\sigma_1^{-1})= 
	 \mu(\sigma_2^{-1})=\frac{1}{4}$$
Here are the probabilities $p(n)$  that $W$ hits $Z$ at the $n$-th step, for $n\leq 12$.  
	 
	 \[ \left| \begin{array}
	 { c | c | c | c | c | c | c | c| c | c | c |c|c|c}	
	
	 \hline
	n& 1 &  2 & 3  & 4 & 5 & 6 & 7 & 8 & 9 & 10 & 11&12&13\\
	\hline 
	p(n) & 0 &  0 & 0.06  & 0.11 & 0.17 & 0.22 & 0.27 & 0.32 & 0.36 & 0.41 & 0.45&0.48&0.52\\
	 \hline
	 \end{array} \right|.\]
	 
	 \subsection{Colorability}
	 The following table gives the probability to be $p$-colorable ($p=3,5,7$) for the closure of a braid given by $N$ steps of a $\mu$-random walk ($N=3,...,10$).
	 
	 \[ \left| \begin{array}
	 { c | c | c | c | c | c | c | c| c | c | c |c|c|c}	
	 
	 \hline
	 N&\dfrac{p}{p^2-1} &N=3 & N= 4 & N=5  & N=6 & N=7 & N=8 & N=9 & N=10 \\
	 \hline 
	 p=3&0.375 & 0.19 & 0.17& 0.24 &0.25  &0.28  &0.29  &0.31  &0.32  \\
	 \hline
	 p=5 &0.21 &0.125&  0.05& 0.16 &0.1  &0.17  &0.14  &0.18  &0.16  \\
	 \hline
	 p=7 &0.15 &0.125 &0.03  &0.15  &0.07  & 0.16 &0.1  &0.15  &0.12  \\
	 \hline
	 \end{array} \right|.\]
	 
\subsection{Lissajous toric knots with $3$ strands}\label{Lissajousstat}
We know from  [S-V]	that the type if a knot $K(N,q,p)$ only depends, up to mirror symmetry, on the congruence of $P$ modulo $q$. Thus for a given $q$, we let $p(q)$ be the cardinal number of 
\begin{equation}
\{p\in\mathbb{N}:q<p<2q, (p,q)=(p,6)=1, (q,p)\ \mbox{verifies 1. (b) of Theorem}\ \ref{theoreme sur lissajous}\}
\end{equation}
We compute $p(q)$, for $3<p<200$, $(q,6)=1$ and $q\leq 1(4)$ (this last assumption is just to make computations simpler).
	 \[ \left| \begin{array}
	{ c | c | c | c| c | c | c |c | c | c | c | c| c | c | c |c | c | c | c | c| c | c | c |c|c|c }	
	\hline
	q& 	5 & 13 & 17& 25 & 29 & 37 & 41 & 49 & 53 & 61 & 65 & 73 & 77 & 85 & 89&97&101&109&113\\
	 	\hline
	 \%	&100	 &50  &80  &57 &78  &67  &85  &71  &82  &80  &81  &75  &85  &73 &79 &75&85&78&89  \\ 
	 	\hline
	 \end{array} \right.\]
	  \[ \left| \begin{array}
	 { c | c | c | c| c | c | c |c | c | c | c | c| c | c | c |c | c | c | c | c| c | c | c |c }	
	 \hline
	q  & 121&125 & 133 & 137 & 145 & 149 & 157 & 161 & 169& 173 & 181 &185 & 193 & 197\\
	 \hline
	\%	& 84& 85&86 & 84 & 74 & 84 & 85 & 89&83&84&78&88&88&88\\ 
	 \hline
	 \end{array} \right.\]
	 
\footnotesize{Marc Soret: Universit\'e de Tours, D\'ep. de Math\'ematiques, 37000 Tours, France,\\
	marc.soret@univ-tours.fr\\
	 \\
Marina Ville: Univ Paris Est Creteil, CNRS, LAMA, F-94010 Creteil, France\\	villemarina@yahoo.fr}
\end{document}